% revision after first reply of JFA's referee. This is the arxiv version
\NeedsTeXFormat{LaTeX2e}

\documentclass[oneside,final,11pt]{amsart}

\usepackage{dsfont}

\newcommand{\R}{\mathds R}
\newcommand{\chilow}[1]{\chi_{\lower2pt\hbox{$\scriptstyle#1$}}}
\newcommand{\Extc}[1]{\Ext\!\big(#1,c_0\!\mskip1.5mu\big)}

\DeclareMathOperator{\Dom}{dom}
\DeclareMathOperator{\Int}{int}
\DeclareMathOperator{\Ker}{Ker}
\DeclareMathOperator{\Ext}{Ext}
\DeclareMathOperator{\supp}{supp}
\DeclareMathOperator{\dens}{dens}

\title[Nontrivial twisted sums of $c_0$ and $C(K)$]{Nontrivial twisted sums of $\mathbf{c_0}$ and $\mathbf{C(K)}$}

\author{Claudia Correa}
\thanks{The first author is sponsored by FAPESP (Process no.\ 2014/00848-2).}
\address{Departamento de Matem\'atica,\hfill\break\indent Universidade de S\~ao Paulo, Brazil}
\email{claudiac.mat@gmail.com}
\author{Daniel V. Tausk}
\address{Departamento de Matem\'atica,\hfill\break\indent Universidade de S\~ao Paulo, Brazil}
\email{tausk@ime.usp.br} \urladdr{http://www.ime.usp.br/\~{}tausk}
\subjclass[2010]{46B20,46E15}
\keywords{Banach spaces of continuous functions; twisted sums of Banach spaces; Valdivia compacta}

\date{October 22nd, 2015}

\begin{document}

%\swapnumbers
\theoremstyle{plain}\newtheorem{teo}{Theorem}[section]
\theoremstyle{plain}\newtheorem{prop}[teo]{Proposition}
\theoremstyle{plain}\newtheorem{lem}[teo]{Lemma}
\theoremstyle{plain}\newtheorem{cor}[teo]{Corollary}
\theoremstyle{definition}\newtheorem{defin}[teo]{Definition}
\theoremstyle{remark}\newtheorem{rem}[teo]{Remark}
\theoremstyle{plain} \newtheorem{assum}[teo]{Assumption}
\theoremstyle{definition}\newtheorem{example}[teo]{Example}
\theoremstyle{plain}\newtheorem*{conjecture}{Conjecture}

\begin{abstract}
We obtain a new class of compact Hausdorff spaces $K$ for which $c_0$ can be nontrivially twisted with $C(K)$.
\end{abstract}

\maketitle

\begin{section}{Introduction}

In this article, we present a broad new class of compact Hausdorff spaces $K$ such that there exists a nontrivial twisted sum of $c_0$ and $C(K)$,
where $C(K)$ denotes the Banach space of continuous real-valued functions on $K$ endowed with the supremum norm. By a {\em twisted sum\/} of the Banach spaces $Y$ and $X$ we mean a short exact sequence $0\rightarrow Y\rightarrow Z\rightarrow X\rightarrow0$, where $Z$ is a Banach space and the maps are bounded linear operators. This twisted sum is called {\em trivial\/} if the exact sequence splits, i.e., if the map $Y\rightarrow Z$ admits a bounded linear left inverse (equivalently, if the map $Z\rightarrow X$ admits a bounded linear right inverse). In other words, the twisted sum is trivial if the range of the map $Y\rightarrow Z$ is complemented in $Z$; in this case, $Z\cong X\oplus Y$. As in \cite{CastilloGonzalez}, we denote by $\Ext(X,Y)$ the set of equivalence classes of twisted sums of $Y$ and $X$ and we write $\Ext(X,Y)=0$ if every such twisted sum is trivial.

Many problems in Banach space theory are related to the quest for conditions under which $\Ext(X,Y)=0$. For instance, an equivalent statement for the classical Theorem of Sobczyk (\cite{JesusSobczyk,Sobczyk}) is that if $X$ is a separable Banach space, then $\Ext(X,c_0)=0$ (\cite[Proposition~3.2]{Advances}). The converse of the latter statement clearly does not hold in general: for example, $\Extc{\ell_1(I)}=0$, since $\ell_1(I)$ is a projective Banach space. However, the following question remains open: {\em is it true that $\Extc{C(K)}\ne0$ for any nonseparable $C(K)$ space?\/} This problem was stated in \cite{CastilloKalton,JesusSobczyk} and further studied in the recent article \cite{Castilloscattered}, in which the author proves that, under the {\em continuum hypothesis\/} (CH), the space $\Extc{C(K)}$ is nonzero for a nonmetrizable compact Hausdorff space $K$ of finite height. In addition to this result, everything else that is known about the problem is summarized in \cite[Proposition~2]{Castilloscattered}, namely that $\Extc{C(K)}$ is nonzero for a $C(K)$ space under any one of the following assumptions:
\begin{itemize}
\item $K$ is a nonmetrizable Eberlein compact space;
\item $K$ is a Valdivia compact space which does not satisfy the countable chain condition (ccc);
\item the weight of $K$ is equal to $\omega_1$ and the dual space of $C(K)$ is not weak*-separable;
\item $K$ has the extension property (\cite{CTextensor}) and it does not have ccc;
\item $C(K)$ contains an isomorphic copy of $\ell_\infty$.
\end{itemize}
Note also that if $\Ext(Y,c_0)\ne0$ and $X$ contains a {\em complemented\/} isomorphic copy of $Y$, then $\Ext(X,c_0)\ne0$.

\smallskip

Here is an overview of the main results of this article. Theorem~\ref{thm:ExtX} gives a condition involving biorthogonal systems in a Banach space $X$ which implies that $\Ext(X,c_0)\ne0$. In the rest of Section~\ref{sec:main}, we discuss some of its implications when $X$ is of the form $C(K)$. It is proven that if $K$ contains a homeomorphic copy of $[0,\omega]\times[0,\mathfrak c]$ or of $2^\mathfrak c$, then $\Extc{C(K)}$ is nonzero, where $\mathfrak c$ denotes the cardinality of the {\it continuum}. In Sections~\ref{sec:Valdivia} and \ref{sec:Chuka}, we investigate the consequences of the results of Section~\ref{sec:main} for Valdivia and Corson compacta. Recall that Valdivia compact spaces constitute a large superclass of Corson compact spaces closed under arbitrary products; moreover, every Eberlein compact is a Corson compact (see \cite{Kalenda} for a survey on Valdivia compacta). Section~\ref{sec:Valdivia} is devoted to the proof that, under CH, it holds that $\Extc{C(K)}\ne0$
for every nonmetrizable Corson compact space $K$. The question of whether $\Extc{C(K)}\ne0$ for an arbitrary nonmetrizable Valdivia compact space $K$ remains open (even under CH), but in Section~\ref{sec:Chuka} we solve some particular cases of this problem.

\end{section}

\begin{section}{General results}
\label{sec:main}

Throughout the paper, the weight and the density character of a topological space $\mathcal X$ are denoted, respectively, by $w(\mathcal X)$ and $\dens(\mathcal X)$. Moreover, we always denote by $\chilow A$ the characteristic function of a set $A$ and by $\vert A\vert$ the cardinality of $A$.
We start with a technical lemma which is the heart of the proof of Theorem~\ref{thm:ExtX}. A family of sets $(A_i)_{i\in I}$ is said to be {\em almost disjoint\/} if each $A_i$ is infinite and $A_i\cap A_j$ is finite, for all $i,j\in I$ with $i\ne j$.
\begin{lem}\label{thm:antichain}
There exists an almost disjoint family $(A_{n,\alpha})_{n\in\omega,\alpha\in\mathfrak c}$ of subsets of $\omega$ satisfying the following property: for every family $(A'_{n,\alpha})_{n\in\omega,\alpha\in\mathfrak c}$ with each $A'_{n,\alpha}\subset A_{n,\alpha}$ cofinite in $A_{n,\alpha}$, it holds that
$\sup_{p\in\omega}\vert M_p\vert=+\infty$, where:
\[M_p=\big\{n\in\omega:\textstyle{p\in\bigcup_{\alpha\in\mathfrak c}A'_{n,\alpha}}\big\}.\]
\end{lem}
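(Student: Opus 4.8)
The plan is to realise the ground set $\omega$ as the set of nodes of the binary tree $2^{<\omega}$ and to take as the almost disjoint family the branches of this tree. For $x\in 2^\omega$ write $B_x=\{x|_m:m\in\omega\}$ for the set of its initial segments; then $B_x$ is infinite and $B_x\cap B_y$ is finite whenever $x\ne y$, so $(B_x)_{x\in 2^\omega}$ is an almost disjoint family of cardinality $\mathfrak c$ on $2^{<\omega}\cong\omega$. The role of the index $n\in\omega$ is played by a partition $2^\omega=\bigcup_{n\in\omega}\mathcal B_n$ of the branches into $\omega$ pieces, each of cardinality $\mathfrak c$; enumerating each $\mathcal B_n$ as $(B_{x(n,\alpha)})_{\alpha\in\mathfrak c}$ and setting $A_{n,\alpha}=B_{x(n,\alpha)}$ produces the required indexing. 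Everything then reduces to choosing this partition well.

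Before constructing the partition I would reformulate the conclusion. Given cofinite sets $A'_{n,\alpha}\subseteq A_{n,\alpha}$, put $U_n=\bigcup_{\alpha}A'_{n,\alpha}$, so that $M_p=\{n:p\in U_n\}$ and the goal is $\sup_p|M_p|=\infty$. Arguing by contraposition, suppose $|M_p|\le K$ for every $p$; then the function $c$ defined by $c(t)=M_t$ maps $2^{<\omega}$ into $[\omega]^{\le K}$. The only feature of the $U_n$ that I will use is this: if $x\in\mathcal B_n$ then $A'_{n,\alpha}=B_x\setminus F$ for some finite $F$ and $A'_{n,\alpha}\subseteq U_n$, so $x|_m\in U_n$, i.e.\ $n\in c(x|_m)$, holds for all but finitely many $m$. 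Thus a bound $K$ would yield a function $c\colon 2^{<\omega}\to[\omega]^{\le K}$ with the following \emph{eventual carrying} property: for every $n$ and every $x\in\mathcal B_n$ one has $n\in c(x|_m)$ for all large $m$.

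The construction is then driven by defeating every such $c$. Since $2^{<\omega}$ and each $[\omega]^{\le K}$ are countable, the collection of all pairs $(K,c)$ with $c\colon 2^{<\omega}\to[\omega]^{\le K}$ has cardinality $\mathfrak c$; enumerate it as $(K_\xi,c_\xi)_{\xi<\mathfrak c}$. By transfinite recursion I would choose, at stage $\xi$, a branch $x_\xi$ not used before and a value $n_\xi\in\omega$, committing $x_\xi$ to the row $\mathcal B_{n_\xi}$, in such a way that $c_\xi$ fails its eventual carrying property at $(n_\xi,x_\xi)$, that is, $n_\xi\notin c_\xi(x_\xi|_m)$ for infinitely many $m$. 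The branches left uncommitted at the end are distributed so that every $\mathcal B_n$ has cardinality $\mathfrak c$. With this partition no $c$ can have the eventual carrying property, so by the previous paragraph no bound $K$ exists and $\sup_p|M_p|=\infty$; here one uses that the function $t\mapsto M_t$ coming from any hypothetical bounded shrinking is itself one of the enumerated $c_\xi$.

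The heart of the matter, and the step I expect to be the main obstacle, is the recursion step, where the bound on $|c_\xi|$ is essential. Fixing an uncommitted branch $x_\xi$ (possible since fewer than $\mathfrak c$ branches have been used), I would look at the sets $c_\xi(x_\xi|_m)\in[\omega]^{\le K_\xi}$ and consider the set of integers lying in $c_\xi(x_\xi|_m)$ for all large $m$. Since each $c_\xi(x_\xi|_m)$ has at most $K_\xi$ elements, this eventual set also has at most $K_\xi$ elements, hence is a proper subset of $\omega$; any $n_\xi$ outside it satisfies $n_\xi\notin c_\xi(x_\xi|_m)$ for infinitely many $m$, as required. It is exactly here that an unbounded finite-valued $c$ could fail to be defeated, which is why the reduction must pass through a fixed $K$. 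The remaining points are routine bookkeeping: that the recursion can keep each row of full cardinality $\mathfrak c$ and that distinct stages never contend for the same branch, both of which follow from $\mathfrak c=\mathfrak c\cdot\omega$ together with the fact that only $<\mathfrak c$ branches are committed before stage $\xi$.
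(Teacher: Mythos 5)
Your proof is correct, and while it shares the paper's skeleton---the ground set is $2^{<\omega}$, the almost disjoint family consists of branches $\{x\vert_m:m\in\omega\}$, and branches are assigned to the rows $n\in\omega$ by a transfinite recursion of length $\mathfrak c$---the mechanism that makes the recursion work is genuinely different. The paper enumerates the uncountable Borel subsets $(\mathcal B_\alpha)_{\alpha\in\mathfrak c}$ of $2^\omega$ and places, in every row $n$, one branch $\epsilon_{n,\alpha}$ inside each $\mathcal B_\alpha$; then, given cofinite shrinkings, the set $D_n$ of branches that ``eventually carry'' $n$ (exactly your notion) is observed to be $F_\sigma$, hence Borel, and its complement must be countable, since otherwise the complement would equal some $\mathcal B_\alpha$ and so contain $\epsilon_{n,\alpha}$, which lies in $D_n$ by construction; intersecting the co-countable sets $D_0,\dots,D_{n-1}$ then produces $p$ with $\vert M_p\vert\ge n$ directly, with no contraposition. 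You instead contrapose, reduce to defeating the $\mathfrak c$-many pairs $(K,c)$ with $c:2^{<\omega}\to[\omega]^{\le K}$, and kill each at a dedicated stage, using the uniform bound $K$ (correctly identified as essential) to find a column $n_\xi$ outside the finite ``eventual set'' of $c_\xi$ along a fresh branch. Your route buys elementariness: no Borel hierarchy and no appeal to the fact that uncountable Borel sets have cardinality $\mathfrak c$ (for which the paper cites Kechris); everything is counting. The paper's route buys a family defined without reference to the objects being defeated, an automatic (rather than enumerated) supply of adversaries via Borelness, and a slightly stronger conclusion: for any shrinking, all but countably many branches eventually carry any given $n$, so the witnesses $p$ can be taken along a single branch. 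One detail of yours to tighten: as literally written, the recursion could exhaust all $\mathfrak c$ branches while committing them, say, all to row $0$, leaving nothing to ``distribute'' and other rows of size less than $\mathfrak c$; the fix you gesture at is indeed routine (reserve beforehand a set of $\mathfrak c$ branches untouched by the recursion, or interleave $\omega\times\mathfrak c$ filling stages), and it is harmless because enlarging a row only strengthens the eventual-carrying requirement imposed on $c$, so no arranged defeat is undone.
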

\begin{proof}
We will obtain an almost disjoint family $(A_{n,\alpha})_{n\in\omega,\alpha\in\mathfrak c}$ of subsets of $2^{<\omega}$ with the desired property, where
$2^{<\omega}=\bigcup_{k\in\omega}2^k$ denotes the set of finite sequences in $2=\{0,1\}$. For each $\epsilon\in2^\omega$, we set:
\[\mathcal A_\epsilon=\big\{\epsilon\vert_k:k\in\omega\big\},\]
so that $(\mathcal A_\epsilon)_{\epsilon\in2^\omega}$ is an almost disjoint family of subsets of $2^{<\omega}$. Let $(\mathcal B_\alpha)_{\alpha\in\mathfrak c}$ be an enumeration of the uncountable Borel subsets of $2^\omega$. Recalling that $\vert\mathcal B_\alpha\vert=\mathfrak c$ for all $\alpha\in\mathfrak c$
(\cite[Theorem~13.6]{Kechris}), one easily obtains by transfinite recursion a family $(\epsilon_{n,\alpha})_{n\in\omega,\alpha\in\mathfrak c}$ of pairwise distinct elements of $2^\omega$ such that $\epsilon_{n,\alpha}\in\mathcal B_\alpha$, for all $n\in\omega$, $\alpha\in\mathfrak c$. Set $A_{n,\alpha}=\mathcal A_{\epsilon_{n,\alpha}}$ and let $(A'_{n,\alpha})_{n\in\omega,\alpha\in\mathfrak c}$ be as in the statement of the lemma.
For $n\in\omega$, denote by $D_n$ the set of those $\epsilon\in2^\omega$ such that $n\in M_p$ for all but finitely many $p\in\mathcal A_\epsilon$.
Note that:
\[D_n=\bigcup_{k_0\in\omega}\bigcap_{k\ge k_0}\bigcup\big\{C_p:\text{$p\in2^k$ with $n\in M_p$}\big\},\]
where $C_p$ denotes the clopen subset of $2^\omega$ consisting of the extensions of $p$. The above equality implies that $D_n$ is an $F_\sigma$ (and, in particular, a Borel) subset of $2^\omega$. We claim that the complement of $D_n$ in $2^\omega$ is countable. Namely, if it were uncountable, there would exist $\alpha\in\mathfrak c$ with $\mathcal B_\alpha=2^\omega\setminus D_n$. But, since $n\in M_p$ for all $p\in A'_{n,\alpha}$, we have that $\epsilon_{n,\alpha}\in D_n$, contradicting the fact that $\epsilon_{n,\alpha}\in\mathcal B_\alpha$ and proving the claim.
To conclude the proof of the lemma, note that for each $n\ge1$ the intersection $\bigcap_{i<n}D_i$ is nonempty; for $\epsilon\in\bigcap_{i<n}D_i$, we have that $\{i:i<n\}\subset M_p$, for all but finitely many $p\in\mathcal A_\epsilon$.
\end{proof}

Let $X$ be a Banach space. Recall that {\em a biorthogonal system\/} in $X$ is a family $(x_i,\gamma_i)_{i\in I}$ with $x_i\in X$, $\gamma_i\in X^*$, $\gamma_i(x_i)=1$ and $\gamma_i(x_j)=0$ for $i\ne j$. The {\em cardinality\/} of the biorthogonal system $(x_i,\gamma_i)_{i\in I}$ is defined as the cardinality of $I$.
\begin{defin}
Let $(x_i,\gamma_i)_{i\in I}$ be a biorthogonal system in a Banach space $X$. We call $(x_i,\gamma_i)_{i\in I}$ {\em bounded\/} if $\sup_{i\in I}\Vert x_i\Vert<+\infty$ and $\sup_{i\in I}\Vert\gamma_i\Vert<+\infty$; {\em weak*-null\/} if $(\gamma_i)_{i\in I}$ is a weak*-null family, i.e., if $\big(\gamma_i(x)\big)_{i\in I}$ is in $c_0(I)$, for all $x\in X$.
\end{defin}

\begin{teo}\label{thm:ExtX}
Let $X$ be a Banach space. Assume that there exist a weak*-null biorthogonal system $(x_{n,\alpha},\gamma_{n,\alpha})_{n\in\omega,\alpha\in\mathfrak c}$
in $X$ and a constant $C\ge0$ such that:
\[\Big\Vert\sum_{i=1}^kx_{n_i,\alpha_i}\Big\Vert\le C,\]
for all $n_1,\ldots,n_k\in\omega$ pairwise distinct, all $\alpha_1,\ldots,\alpha_k\in\mathfrak c$, and all $k\ge1$.
Then $\Ext(X,c_0)\ne0$.
\end{teo}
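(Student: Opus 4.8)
```latex
The plan is to construct an explicit twisted sum $0 \to c_0 \to Z \to X \to 0$ and prove that it does not split by exploiting the combinatorial obstruction furnished by Lemma~\ref{thm:antichain}. The natural way to produce a twisted sum of $c_0$ and $X$ is via a \emph{quasi-linear map}: I would look for a homogeneous map $F \colon X \to \ell_\infty/c_0$ (or, more concretely, a map $\Omega \colon X \to \ell_\infty(\omega)$ that is linear modulo $c_0$ and bounded by $C'\Vert x\Vert$ on the difference $\Omega(x+y)-\Omega(x)-\Omega(y)$), since such a map encodes precisely the equivalence class in $\Ext(X,c_0)$. Using the almost disjoint family $(A_{n,\alpha})$ from the lemma together with the functionals $\gamma_{n,\alpha}$, the candidate is something like $\Omega(x) = \big(\sum_{n,\alpha} \gamma_{n,\alpha}(x)\,\chilow{A_{n,\alpha}}(p)\big)_{p\in\omega}$, suitably truncated so that it lands in $\ell_\infty$; the weak*-nullity of $(\gamma_{n,\alpha})$ should guarantee that for each fixed $x$ only finitely many terms are non-negligible on any given coordinate, keeping the sum well defined modulo $c_0$.

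First I would verify that this $\Omega$ is genuinely quasi-linear with constant controlled by $C$ and the supremum norms of the biorthogonal system: the almost-disjointness of the $A_{n,\alpha}$ ensures that on each coordinate $p$ only boundedly many indices contribute, and the uniform bound $\Vert\sum_i x_{n_i,\alpha_i}\Vert \le C$ together with boundedness of the $\gamma_{n,\alpha}$ should give the required estimate $\Vert\Omega(x+y)-\Omega(x)-\Omega(y)\Vert_\infty \le C'(\Vert x\Vert + \Vert y\Vert)$. Granting quasi-linearity, the induced exact sequence $0\to c_0 \to Z_\Omega \to X \to 0$ is a legitimate twisted sum, and the remaining task is to show it is \emph{nontrivial}.

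The heart of the argument—and the main obstacle—is the nontriviality. A twisted sum given by $\Omega$ splits if and only if $\Omega$ is \emph{trivial}, i.e.\ $\Omega = L + B$ where $L$ is a (possibly unbounded) linear map and $B \colon X \to \ell_\infty$ is bounded. The strategy is to assume such a decomposition exists and derive a contradiction from Lemma~\ref{thm:antichain}. Concretely, testing $\Omega$ against the vectors $x_{n,\alpha}$, one computes $\Omega(x_{n,\alpha})$ on the coordinates $p\in A_{n,\alpha}$ and sees that it is essentially $\chilow{A_{n,\alpha}}$; a bounded-plus-linear decomposition would force, for each $n$ and $\alpha$, the set $A'_{n,\alpha}=\{p\in A_{n,\alpha}: |B(x_{n,\alpha})(p)|<\tfrac12\}$ to be cofinite in $A_{n,\alpha}$. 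Applying the lemma to this family $(A'_{n,\alpha})$ yields a coordinate-indexed set $M_p$ with $\sup_p |M_p| = +\infty$.

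Finally I would convert $\sup_p|M_p|=+\infty$ into a norm blow-up that contradicts boundedness of $B$. For each fixed $p$ pick $n_1,\dots,n_k \in M_p$ distinct (with $k$ arbitrarily large) and corresponding $\alpha_i$, and consider the vector $x = \sum_{i=1}^k x_{n_i,\alpha_i}$, whose norm is at most $C$ by hypothesis. On the coordinate $p$, each term contributes roughly $+1$ to $\Omega(x)(p)$ while the linear part $L$ and the bounded part $B$ cannot absorb an unbounded sum, so $|\Omega(x)(p)| \approx k$ while $\Vert x\Vert \le C$; this contradicts the boundedness estimate forced by triviality once $k$ exceeds the relevant constants. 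I expect the delicate points to be the bookkeeping that keeps $\Omega$ well defined modulo $c_0$ (controlling the tails via weak*-nullity) and the precise passage from the cofinite sets $A'_{n,\alpha}$ through $M_p$ to the norm estimate; the combinatorial core, however, is exactly what Lemma~\ref{thm:antichain} is designed to supply, so the argument should close cleanly.
```
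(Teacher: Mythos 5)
Your overall strategy is in fact the paper's strategy in different clothing: the paper forms the bounded operator $T=S\circ\Gamma:X\to\ell_\infty/c_0$, where $\Gamma$ has coordinate functionals $(\gamma_{n,\alpha})_{n\in\omega,\alpha\in\mathfrak c}$ and $S$ is the isometric embedding of $c_0(\omega\times\mathfrak c)$ sending $e_{n,\alpha}$ to $\chilow{A_{n,\alpha}}+c_0$, and then uses Lemma~\ref{thm:antichain} together with the bound $C$ to show that $T$ admits no bounded linear lifting to $\ell_\infty$; by \cite[Proposition~1.4.f]{CastilloGonzalez} this gives $\Ext(X,c_0)\ne0$, the nontrivial extension being exactly the pull-back you mention. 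Your $\Omega$ is intended to be a homogeneous selection of this $T$, and your final contradiction is the paper's. However, two steps of your write-up are genuinely defective. First, your defining formula for $\Omega$ need not make sense: for a fixed coordinate $p$ there can be uncountably many pairs $(n,\alpha)$ with $p\in A_{n,\alpha}$ (in the construction of Lemma~\ref{thm:antichain}, every $\epsilon_{n,\alpha}$ extending $p$ contributes), and weak*-nullity says only that $\big(\gamma_{n,\alpha}(x)\big)_{n,\alpha}$ lies in $c_0(\omega\times\mathfrak c)$; such a family need not be absolutely summable, so the sum defining $\Omega(x)(p)$ can diverge. ``Only finitely many non-negligible terms'' bounds the number of terms above any fixed threshold, not the sum of the remaining small ones. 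What is well defined is the value modulo $c_0$: by almost disjointness the assignment $v\mapsto\sum_{n,\alpha}v_{n,\alpha}\chilow{A_{n,\alpha}}+c_0$ is isometric on finitely supported $v$ and extends continuously to all of $c_0(\omega\times\mathfrak c)$; that is, one must work with an operator into $\ell_\infty/c_0$ (the paper's $S$), not with a coordinatewise formula into $\ell_\infty$.

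Second, and more seriously, your splitting criterion is misstated for maps taking values in the ambient space $\ell_\infty$ rather than in $c_0$. For a homogeneous $\Omega:X\to\ell_\infty$ whose additive defects lie in $c_0$, the induced extension of $c_0$ by $X$ is trivial if and only if $\Omega=L+B$ with $L$ linear and $B$ bounded \emph{with values in $c_0$}; equivalently, if and only if $q\circ\Omega$ admits a bounded linear lifting, where $q:\ell_\infty\to\ell_\infty/c_0$ is the quotient map --- which is the criterion the paper invokes. With $B$ merely bounded into $\ell_\infty$, as you wrote, the criterion is vacuous: your $\Omega$, being a homogeneous selection of a bounded operator, is itself bounded, so the decomposition $\Omega=0+\Omega$ would make every such extension ``trivial'' and no contradiction could ever be reached. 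The error propagates to the key step: your claim that a decomposition forces $A'_{n,\alpha}=\big\{p\in A_{n,\alpha}:|B(x_{n,\alpha})(p)|<\tfrac12\big\}$ to be cofinite in $A_{n,\alpha}$ has no justification for a merely bounded $B$, since cofiniteness is precisely the assertion that the coordinates of $B(x_{n,\alpha})$ decay along $A_{n,\alpha}$, i.e., $c_0$-valuedness. Once the criterion is corrected, the argument closes and becomes the paper's proof verbatim: $L=\Omega-B$ is then a bounded linear map with $L(x_{n,\alpha})-\chilow{A_{n,\alpha}}\in c_0$, so the sets $A'_{n,\alpha}=\big\{p\in A_{n,\alpha}:L(x_{n,\alpha})(p)\ge\tfrac12\big\}$ are cofinite, and Lemma~\ref{thm:antichain}, the linearity of $L$, and the hypothesis $\Vert\sum_{i=1}^k x_{n_i,\alpha_i}\Vert\le C$ yield $k/2\le\Vert L\Vert C$ for every $k$, with $L$ playing the role of the lifting $\widehat T$ and its coordinate functionals playing the role of the $\mu_p$.
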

\begin{proof}
By \cite[Proposition~1.4.f]{CastilloGonzalez}, we have that $\Ext(X,c_0)=0$ if and only if every bounded operator $T:X\to\ell_\infty/c_0$ admits a {\em lifting\/}\footnote{%
More concretely, a nontrivial twisted sum of $c_0$ and $X$ is obtained by considering the pull-back of the short exact sequence $0\rightarrow c_0\rightarrow\ell_\infty\rightarrow\ell_\infty/c_0\rightarrow0$ by an operator $T:X\to\ell_\infty/c_0$ that does not admit a lifting.}, i.e., a bounded operator $\widehat T:X\to\ell_\infty$ with $T(x)=\widehat T(x)+c_0$, for all $x\in X$. Let us then show that there exists an operator $T:X\to\ell_\infty/c_0$ that does not admit a lifting. To this aim, let $(A_{n,\alpha})_{n\in\omega,\alpha\in\mathfrak c}$ be an almost disjoint family as in Lemma~\ref{thm:antichain} and consider the unique isometric embedding $S:c_0(\omega\times\mathfrak c)\to\ell_\infty/c_0$ such that $S(e_{n,\alpha})=\chilow{A_{n,\alpha}}+c_0$, where $(e_{n,\alpha})_{n\in\omega,\alpha\in\mathfrak c}$ denotes the canonical basis of $c_0(\omega\times\mathfrak c)$. Denote by $\Gamma:X\to c_0(\omega\times\mathfrak c)$ the bounded operator with coordinate functionals $(\gamma_{n,\alpha})_{n\in\omega,\alpha\in\mathfrak c}$ and set $T=S\circ\Gamma:X\to\ell_\infty/c_0$. Assuming by contradiction that there exists a lifting $\widehat T$ of $T$ and denoting by $(\mu_p)_{p\in\omega}$ the sequence of coordinate functionals of $\widehat T$, we have that the set:
\[A'_{n,\alpha}=\big\{p\in A_{n,\alpha}:\mu_p(x_{n,\alpha})\ge\tfrac12\big\}\]
is cofinite in $A_{n,\alpha}$. It follows that for each $k\ge1$, there exist $p\in\omega$, $n_1,\ldots,n_k\in\omega$ pairwise distinct,
and $\alpha_1,\ldots,\alpha_k\in\mathfrak c$ such that $p\in A'_{n_i,\alpha_i}$, for $i=1,\ldots,k$. Hence:
\[\frac k2\le\mu_p\Big(\sum_{i=1}^kx_{n_i,\alpha_i}\Big)\le C\Vert\widehat T\Vert,\]
which yields a contradiction.
\end{proof}

\begin{cor}\label{thm:produtozero}
Let $K$ be a compact Hausdorff space. Assume that there exists a bounded weak*-null biorthogonal system $(f_{n,\alpha},\gamma_{n,\alpha})_{n\in\omega,\alpha\in\mathfrak c}$ in $C(K)$ such that $f_{n,\alpha}f_{m,\beta}=0$, for all $n,m\in\omega$ with $n\ne m$ and all $\alpha,\beta\in\mathfrak c$. Then $\Extc{C(K)}\ne0$.\qed
\end{cor}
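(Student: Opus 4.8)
The plan is to derive Corollary~\ref{thm:produtozero} as a direct application of Theorem~\ref{thm:ExtX} with $X = C(K)$. Since the hypotheses already furnish a bounded weak*-null biorthogonal system $(f_{n,\alpha},\gamma_{n,\alpha})_{n\in\omega,\alpha\in\mathfrak c}$, the only thing left to verify is the uniform norm bound $\big\Vert\sum_{i=1}^k f_{n_i,\alpha_i}\big\Vert\le C$ for pairwise distinct $n_1,\dots,n_k$ and arbitrary $\alpha_1,\dots,\alpha_k$. The given multiplicative condition $f_{n,\alpha}f_{m,\beta}=0$ whenever $n\ne m$ is precisely what controls this sum, so the heart of the argument is translating disjointness of supports into a supremum-norm estimate.

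First I would observe that the hypothesis $f_{n,\alpha}f_{m,\beta}=0$ for $n\ne m$ means that the functions $f_{n_1,\alpha_1},\dots,f_{n_k,\alpha_k}$ indexed by distinct first coordinates have pairwise disjoint supports (more precisely, disjoint cozero sets). For functions in $C(K)$ with pairwise disjoint supports, the supremum norm of a finite sum equals the maximum of the individual supremum norms: at any point $x\in K$, at most one summand $f_{n_i,\alpha_i}$ is nonzero, so $\big\vert\sum_{i=1}^k f_{n_i,\alpha_i}(x)\big\vert = \max_i\vert f_{n_i,\alpha_i}(x)\vert \le \sup_i\Vert f_{n_i,\alpha_i}\Vert$. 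Taking the supremum over $x\in K$ yields
\[
\Big\Vert\sum_{i=1}^k f_{n_i,\alpha_i}\Big\Vert \le \sup_{n\in\omega,\,\alpha\in\mathfrak c}\Vert f_{n,\alpha}\Vert =: C,
\]
which is finite precisely because the biorthogonal system is bounded.

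With this uniform bound in hand, the system $(f_{n,\alpha},\gamma_{n,\alpha})_{n\in\omega,\alpha\in\mathfrak c}$ satisfies every hypothesis of Theorem~\ref{thm:ExtX}: it is weak*-null by assumption, and the displayed inequality is exactly the required estimate with the constant $C$ above. Applying Theorem~\ref{thm:ExtX} to $X=C(K)$ then gives $\Ext\big(C(K),c_0\big)\ne0$, which is the desired conclusion $\Extc{C(K)}\ne0$. I do not anticipate any serious obstacle here; the only subtlety is the elementary but essential point that disjoint supports convert a sum-norm into a max-norm, and one should be careful to use the full strength of the multiplicative hypothesis (distinct $n$ forces the products to vanish regardless of the $\alpha$-indices), since this is what guarantees that no point of $K$ receives contributions from two distinct summands.
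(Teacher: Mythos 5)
Your proposal is correct and follows exactly the route the paper intends: the corollary is stated with no written proof precisely because it is the immediate application of Theorem~\ref{thm:ExtX} that you spell out, with the multiplicative hypothesis giving disjoint cozero sets and hence the uniform bound $\big\Vert\sum_{i=1}^k f_{n_i,\alpha_i}\big\Vert\le\sup_{n,\alpha}\Vert f_{n,\alpha}\Vert<+\infty$. Your attention to the fact that the products vanish for distinct $n$ regardless of the $\alpha$-indices is exactly the point that makes the estimate work.
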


\begin{defin}\label{thm:propstar}
We say that a compact Hausdorff space $K$ satisfies {\em property ($*$)\/} if there exist a sequence $(F_n)_{n\in\omega}$ of closed subsets of $K$ and a bounded weak*-null biorthogonal system $(f_{n,\alpha},\gamma_{n,\alpha})_{n\in\omega,\alpha\in\mathfrak c}$ in $C(K)$ such that:
\begin{equation}\label{eq:discretefamily}
F_n\cap\overline{\bigcup_{m\ne n}F_m}=\emptyset
\end{equation}
and $\supp f_{n,\alpha}\subset F_n$, for all $n\in\omega$ and all $\alpha\in\mathfrak c$, where $\supp f_{n,\alpha}$ denotes the support of $f_{n,\alpha}$.
\end{defin}

In what follows, we denote by $M(K)$ the space of finite countably-additive signed regular Borel measures on $K$, endowed with the total variation norm. We identify as usual the dual space of $C(K)$ with $M(K)$.
\begin{lem}\label{thm:fechapracima}
Let $K$ be a compact Hausdorff space and $L$ be a closed subspace of $K$. If $L$ satisfies property ($*$), then so does $K$.
\end{lem}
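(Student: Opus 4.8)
The plan is to transport the data witnessing property ($*$) for $L$ up to $K$ by means of the restriction operator $R\colon C(K)\to C(L)$, $R(u)=u|_L$, together with the Tietze and Urysohn theorems. Write $(F_n)_{n\in\omega}$ and $(f_{n,\alpha},\gamma_{n,\alpha})_{n\in\omega,\alpha\in\mathfrak c}$ for a sequence of closed sets and a bounded weak*-null biorthogonal system in $C(L)$ as in Definition~\ref{thm:propstar}, so that $\supp f_{n,\alpha}\subseteq F_n$ and the relation \eqref{eq:discretefamily} holds in $L$. Since $L$ is closed in $K$, each $F_n$ is closed in $K$, and for a subset of $L$ the closure in $K$ coincides with the closure in $L$; hence \eqref{eq:discretefamily} persists in $K$. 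The functionals are the easy part: the adjoint $R^*\colon M(L)\to M(K)$ is the isometric embedding sending a regular Borel measure on $L$ to the same measure, regarded as null outside $L$. I set $\widetilde\gamma_{n,\alpha}=R^*(\gamma_{n,\alpha})$; these have the same norms, so boundedness is preserved, and since $\widetilde\gamma_{n,\alpha}(u)=\gamma_{n,\alpha}(u|_L)$ for every $u\in C(K)$ and $(\gamma_{n,\alpha})$ is weak*-null in $C(L)^*$, the family $(\widetilde\gamma_{n,\alpha})$ is weak*-null in $C(K)^*$.

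The real issue is the functions. A continuous extension of $f_{n,\alpha}$ to $K$ need not be supported inside $L$ (its support spills into $K\setminus L$ unless $f_{n,\alpha}$ vanishes on the boundary of $L$), so the sets $F_n$ themselves cannot serve as the closed sets for $K$: I must manufacture a new discrete closed family in $K$ with a little room around each $F_n$. This is the main obstacle, and I resolve it with a single auxiliary function. For each $n$, normality of $K$ and Urysohn's lemma give a continuous $h_n\colon K\to[0,1]$ with $h_n\equiv1$ on $F_n$ and $h_n\equiv0$ on $\overline{\bigcup_{m\ne n}F_m}$; put $h=\sum_{n\in\omega}2^{-n}h_n$, a uniformly convergent, hence continuous, function $h\colon K\to\R$. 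The point of this choice is that $h$ is constant on each $F_n$: since $F_n\subseteq\bigcup_{m\ne k}F_m$ whenever $k\ne n$, the function $h_k$ vanishes on $F_n$ for $k\ne n$, whence $h\equiv2^{-n}$ on $F_n$. Thus $h$ sends the sets $F_n$ to the pairwise distinct values $2^{-n}$, which accumulate only at $0$.

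Now I fix pairwise disjoint open intervals $I_n\subseteq(0,+\infty)$ with $2^{-n}\in I_n$ and $0\notin\overline{I_n}$ (for instance $I_n=(2^{-n}-2^{-n-3},\,2^{-n}+2^{-n-3})$), and define the new closed sets $\widetilde F_n=h^{-1}(\overline{I_n})$. The discreteness relation for $K$ then reduces to a statement on $(0,+\infty)$: because $\overline{\bigcup_{m\ne n}I_m}=\{0\}\cup\bigcup_{m\ne n}\overline{I_m}$ is disjoint from $\overline{I_n}$, and since $\widetilde F_m=h^{-1}(\overline{I_m})$ with $\overline{h^{-1}(A)}\subseteq h^{-1}(\overline A)$, applying $h^{-1}$ gives
\[\widetilde F_n\cap\overline{\textstyle\bigcup_{m\ne n}\widetilde F_m}\ \subseteq\ h^{-1}\Big(\overline{I_n}\cap\big(\{0\}\cup\textstyle\bigcup_{m\ne n}\overline{I_m}\big)\Big)=\emptyset.\]
Finally I build the extended functions supported in these sets. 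Choose continuous $\theta_n\colon\R\to[0,1]$ with $\theta_n(2^{-n})=1$ and $\supp\theta_n\subseteq I_n$, let $g_{n,\alpha}\in C(K)$ be a Tietze extension of $f_{n,\alpha}$ with $\Vert g_{n,\alpha}\Vert=\Vert f_{n,\alpha}\Vert$, and set $\widetilde f_{n,\alpha}=(\theta_n\circ h)\,g_{n,\alpha}$. Then $\supp\widetilde f_{n,\alpha}\subseteq h^{-1}(\supp\theta_n)\subseteq\widetilde F_n$ and $\Vert\widetilde f_{n,\alpha}\Vert\le\Vert f_{n,\alpha}\Vert$, so the new system is bounded and its functions sit inside the discrete family $(\widetilde F_n)$.

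It remains to check biorthogonality, which comes from the restriction identity $\widetilde f_{n,\alpha}|_L=f_{n,\alpha}$: on $L$ one has $\widetilde f_{n,\alpha}|_L=(\theta_n\circ h|_L)\,f_{n,\alpha}$, and this equals $f_{n,\alpha}$ because $\theta_n\circ h\equiv1$ on $F_n\supseteq\supp f_{n,\alpha}$ while $f_{n,\alpha}$ vanishes off its support. Consequently $\widetilde\gamma_{n,\alpha}(\widetilde f_{m,\beta})=\gamma_{n,\alpha}(\widetilde f_{m,\beta}|_L)=\gamma_{n,\alpha}(f_{m,\beta})$, which is the required biorthogonality. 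This exhibits property ($*$) for $K$. The only steps needing genuine care are the constancy of $h$ on each $F_n$ (which is what makes the level-set construction yield a discrete family) and the restriction identity above; everything else is bookkeeping.
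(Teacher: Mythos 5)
Your proof is correct and follows essentially the same route as the paper's: extend each $\gamma_{n,\alpha}$ by zero to a measure on $K$, extend each $f_{n,\alpha}$ by Tietze and multiply by a cutoff function equal to $1$ on $F_n$ and supported inside pairwise disjoint neighborhoods of the $F_n$, then verify biorthogonality, boundedness and weak*-nullity exactly as you do. The only difference is how the disjoint neighborhoods are manufactured: the paper gets pairwise disjoint open sets $U_n\supset F_n$ by a direct recursion using normality and cuts off inside them (which makes the discreteness condition for the new closed sets automatic), whereas you obtain them as preimages $h^{-1}(I_n)$ of disjoint intervals under the single function $h=\sum_{n}2^{-n}h_n$ --- an elegant but ultimately cosmetic variation of the same separation step.
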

\begin{proof}
Consider, as in Definition~\ref{thm:propstar}, a sequence $(F_n)_{n\in\omega}$ of closed subsets of $L$ and a bounded weak*-null biorthogonal system $(f_{n,\alpha},\gamma_{n,\alpha})_{n\in\omega,\alpha\in\mathfrak c}$ in $C(L)$.
By recursion on $n$, one easily obtains a sequence $(U_n)_{n\in\omega}$ of pairwise disjoint open subsets of $K$ with each $U_n$ containing $F_n$.
Let $V_n$ be an open subset of $K$ with $F_n\subset V_n\subset\overline{V_n}\subset U_n$. Using Tietze's Extension Theorem and Urysohn's Lemma, we get a continuous extension $\tilde f_{n,\alpha}$ of $f_{n,\alpha}$ to $K$ with support contained in $\overline{V_n}$ and having the same norm as $f_{n,\alpha}$. To conclude the proof, let
$\tilde\gamma_{n,\alpha}\in M(K)$ be the extension of $\gamma_{n,\alpha}\in M(L)$ that vanishes identically outside of $L$
and observe that $(\tilde f_{n,\alpha},\tilde\gamma_{n,\alpha})_{n\in\omega,\alpha\in\mathfrak c}$ is a bounded weak*-null biorthogonal system in $C(K)$.
\end{proof}

As an immediate consequence of Lemma~\ref{thm:fechapracima} and Corollary~\ref{thm:produtozero}, we obtain the following result.
\begin{teo}
If a compact Hausdorff space $L$ satisfies property ($*$), then every compact Hausdorff space $K$ containing a homeomorphic copy of $L$ satisfies
$\Extc{C(K)}\ne0$.\qed
\end{teo}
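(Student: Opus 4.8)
The plan is to deduce the statement from the two results the authors name, Lemma~\ref{thm:fechapracima} and Corollary~\ref{thm:produtozero}: I would first transport property ($*$) from $L$ into $K$, and then read off from it the product-zero hypothesis that Corollary~\ref{thm:produtozero} requires.

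First I would fix a homeomorphism $\varphi\colon L\to L'$ onto a subspace $L'$ of $K$. Since $L$ is compact and $K$ is Hausdorff, the image $L'$ is compact and hence closed in $K$, so $L'$ is a closed subspace of $K$. The key preliminary observation is that property ($*$) is a homeomorphism invariant. Indeed, $\varphi$ induces an isometric isomorphism $\varphi^*\colon C(L')\to C(L)$, $f\mapsto f\circ\varphi$, whose adjoint maps $M(L)$ onto $M(L')$. Given a sequence $(F_n)_{n\in\omega}$ and a system $(f_{n,\alpha},\gamma_{n,\alpha})_{n\in\omega,\alpha\in\mathfrak c}$ witnessing property ($*$) for $L$, I would push the closed sets $F_n$ forward by $\varphi$, transport the functions $f_{n,\alpha}$ by $(\varphi^*)^{-1}$, and transport the functionals $\gamma_{n,\alpha}$ by the adjoint. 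Because $\varphi^*$ is an isometry, the transported system is again bounded; the biorthogonality relations and the weak*-null condition are preserved by the duality; supports are carried to supports, so the inclusions $\supp f_{n,\alpha}\subset F_n$ persist; and the discreteness relation \eqref{eq:discretefamily} is purely topological and hence invariant under $\varphi$. Thus $L'$ satisfies property ($*$), and Lemma~\ref{thm:fechapracima}, applied to the closed subspace $L'\subset K$, shows that $K$ satisfies property ($*$) as well.

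It remains to check that property ($*$) supplies the hypothesis of Corollary~\ref{thm:produtozero}. Fixing closed sets $(F_n)_{n\in\omega}$ and a bounded weak*-null biorthogonal system $(f_{n,\alpha},\gamma_{n,\alpha})_{n\in\omega,\alpha\in\mathfrak c}$ in $C(K)$ as in Definition~\ref{thm:propstar}, relation \eqref{eq:discretefamily} gives in particular $F_n\cap F_m=\emptyset$ for $n\ne m$, whence $\supp f_{n,\alpha}\cap\supp f_{m,\beta}\subset F_n\cap F_m=\emptyset$ and therefore $f_{n,\alpha}f_{m,\beta}=0$ whenever $n\ne m$. This is exactly what Corollary~\ref{thm:produtozero} needs, so it yields $\Extc{C(K)}\ne0$. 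The argument is essentially bookkeeping; the only step deserving genuine care is the homeomorphism-invariance of property ($*$), and even that reduces to observing that every ingredient of Definition~\ref{thm:propstar} is either topological or stable under the isometric isomorphism induced by $\varphi$.
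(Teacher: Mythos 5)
Your proposal is correct and follows exactly the route the paper intends: transport property ($*$) along the homeomorphism to the closed copy of $L$ in $K$, apply Lemma~\ref{thm:fechapracima} to get property ($*$) for $K$, and then note that \eqref{eq:discretefamily} together with $\supp f_{n,\alpha}\subset F_n$ forces $f_{n,\alpha}f_{m,\beta}=0$ for $n\ne m$, so Corollary~\ref{thm:produtozero} applies. The paper states this as immediate; you have simply written out the same bookkeeping, including the homeomorphism-invariance step the authors leave implicit.
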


We now establish a few results which give sufficient conditions for a space $K$ to satisfy property ($*$).
Recall that, given a closed subset $F$ of a compact Hausdorff space $K$,
an {\em extension operator\/} for $F$ in $K$ is a bounded operator $E:C(F)\to C(K)$ which is a right inverse for the restriction operator $C(K)\ni f\mapsto f\vert_F\in C(F)$. Note that $F$ admits an extension operator in $K$ if and only if the kernel
\[C(K|F)=\big\{f\in C(K):f\vert_F=0\big\}\]
of the restriction operator is complemented in $C(K)$. A point $x$ of a topological space $\mathcal X$ is called a {\em cluster point\/} of a sequence $(S_n)_{n\in\omega}$ of subsets of $\mathcal X$ if every neighborhood of $x$ intersects $S_n$ for infinitely many $n\in\omega$.

\begin{lem}\label{thm:abc}
Let $K$ be a compact Hausdorff space. Assume that there exist a sequence $(F_n)_{n\in\omega}$ of pairwise disjoint closed subsets of $K$ and a
closed subset $F$ of $K$ satisfying the following conditions:
\begin{itemize}
\item[(a)] $F$ admits an extension operator in $K$;
\item[(b)] every cluster point of $(F_n)_{n\in\omega}$ is in $F$ and $F_n\cap F=\emptyset$, for all $n\in\omega$;
\item[(c)] there exists a family $(f_{n,\alpha},\gamma_{n,\alpha})_{n\in\omega,\alpha\in\mathfrak c}$, where
$(f_{n,\alpha},\gamma_{n,\alpha})_{\alpha\in\mathfrak c}$ is a weak*-null biorthogonal system in $C(F_n)$ for each $n\in\omega$ and \[\sup_{n\in\omega,\alpha\in\mathfrak c}\Vert f_{n,\alpha}\Vert<+\infty,\quad\sup_{n\in\omega,\alpha\in\mathfrak c}\Vert\gamma_{n,\alpha}\Vert<+\infty.\]
\end{itemize}
Then $K$ satisfies property ($*$).
\end{lem}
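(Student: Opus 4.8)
The plan is to assemble, out of the local data in (c), a single bounded weak*-null biorthogonal system in $C(K)$ together with a family of closed sets witnessing property ($*$); the separation needed for the supports will come from (b), while the extension operator of (a) will be used only to upgrade the per-$n$ weak*-nullity to joint weak*-nullity in the index $(n,\alpha)$. The first thing I would record is the elementary consequence of (b) that $(F_n)$ already satisfies $F_n\cap\overline{\bigcup_{m\neq n}F_m}=\emptyset$: a point $x$ of this intersection lies outside $F$; if every neighbourhood of $x$ met infinitely many $F_m$ then $x$ would be a cluster point of $(F_n)$, hence in $F$, a contradiction, and if some neighbourhood met only finitely many then $x$ would lie in a finite union of the pairwise disjoint closed sets $F_m$, forcing $x\in F_m$ for some $m\neq n$, again impossible.

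Writing $B_n=F\cup\bigcup_{j<n}\overline{W_j}\cup\overline{\bigcup_{m>n}F_m}$, I would then build recursively, using normality of $K$, open sets $W_n$ with $F_n\subseteq W_n\subseteq\overline{W_n}\subseteq\big(\bigcap_{j<n}Q_j\big)\setminus B_n$, fixing after each stage an open set $Q_n\supseteq\overline{\bigcup_{m>n}F_m}$ with $\overline{Q_n}\cap\overline{W_n}=\emptyset$. This is feasible because $F_n$ is disjoint from the closed set $B_n$ by the previous paragraph and (b), and because $F_n\subseteq\overline{\bigcup_{k>j}F_k}\subseteq Q_j$ for every $j<n$, so that $F_n\subseteq\bigcap_{j<n}Q_j$. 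Putting $G_n=\overline{W_n}$, the constraint that every later $W_m$ ($m>n$) lies in $Q_n$ gives $\overline{W_n}\cap\overline{\bigcup_{m>n}W_m}\subseteq\overline{W_n}\cap\overline{Q_n}=\emptyset$; combined with the disjointness of the finitely many earlier closures, this yields $G_n\cap\overline{\bigcup_{m\neq n}G_m}=\emptyset$, i.e.\ (\ref{eq:discretefamily}), while $G_n\cap F=\emptyset$ by construction.

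With the $G_n$ in hand I would finish the construction. By Tietze's theorem and Urysohn's lemma I extend each $f_{n,\alpha}$ to $\tilde f_{n,\alpha}\in C(K)$ with $\tilde f_{n,\alpha}|_{F_n}=f_{n,\alpha}$, $\supp\tilde f_{n,\alpha}\subseteq G_n$ and $\|\tilde f_{n,\alpha}\|=\|f_{n,\alpha}\|$; by (c) the family $(\tilde f_{n,\alpha})$ is uniformly bounded, and each $\tilde f_{n,\alpha}$ vanishes on $F$ since $G_n\cap F=\emptyset$. For the functionals, let $R:C(K)\to C(F)$ be the restriction, $E$ the extension operator of (a), and $P=\mathrm{id}-E\circ R$ the associated bounded projection onto $C(K|F)$; I set $\tilde\gamma_{n,\alpha}(f)=\gamma_{n,\alpha}\big((Pf)|_{F_n}\big)$, which is uniformly bounded by $\|P\|\sup\|\gamma_{n,\alpha}\|$. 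Biorthogonality is then immediate: since $\tilde f_{m,\beta}$ vanishes on $F$ one has $P\tilde f_{m,\beta}=\tilde f_{m,\beta}$, and $\tilde f_{m,\beta}|_{F_n}=\delta_{mn}f_{m,\beta}$ because $\supp\tilde f_{m,\beta}\subseteq G_m$ is disjoint from $F_n\subseteq G_n$ for $m\neq n$; hence $\tilde\gamma_{n,\alpha}(\tilde f_{m,\beta})=\gamma_{n,\alpha}(f_{m,\beta})$ when $m=n$ and $0$ otherwise.

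The step I expect to demand the most care, and the one where (a) is genuinely needed, is the \emph{joint} weak*-nullity of $(\tilde\gamma_{n,\alpha})$. The decisive observation is that any $g\in C(K)$ with $g|_F=0$ satisfies $\|g|_{F_n}\|_\infty\to0$: otherwise there would be $\varepsilon>0$ and points $x_{n_k}\in F_{n_k}$ with $|g(x_{n_k})|\ge\varepsilon$, which by compactness of $K$ have a cluster point $x$; this $x$ is a cluster point of $(F_n)$, hence lies in $F$ by (b), so $g(x)=0$, contradicting $|g(x)|\ge\varepsilon$. Applying this to $g=Pf\in C(K|F)$ for a fixed $f\in C(K)$ gives $\sup_{\alpha}|\tilde\gamma_{n,\alpha}(f)|\le\sup\|\gamma_{n,\alpha}\|\cdot\|(Pf)|_{F_n}\|_\infty\to0$, so for each $\varepsilon$ only finitely many $n$ contribute any index with $|\tilde\gamma_{n,\alpha}(f)|\ge\varepsilon$; for each such $n$ the weak*-nullity of the system in $C(F_n)$ leaves only finitely many $\alpha$, and these two facts together place $\big(\tilde\gamma_{n,\alpha}(f)\big)_{n,\alpha}$ in $c_0(\omega\times\mathfrak c)$. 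Thus $(\tilde f_{n,\alpha},\tilde\gamma_{n,\alpha})$ is a bounded weak*-null biorthogonal system in $C(K)$ whose supports lie in the family $(G_n)$ satisfying (\ref{eq:discretefamily}), which is exactly property ($*$).
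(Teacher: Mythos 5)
Your proof is correct and follows essentially the same route as the paper's: Tietze--Urysohn extensions of the $f_{n,\alpha}$ supported in separated closed sets disjoint from $F$, functionals obtained by composing the $C(F_n)$-functionals with the projection $P=\mathrm{id}-E\circ R$ onto $C(K|F)$ so that biorthogonality survives, and joint weak*-nullity deduced from the fact that $\Vert (Pf)\vert_{F_n}\Vert\to 0$ for $f\in C(K)$, which is exactly the paper's use of hypotheses (a) and (b). The only differences are cosmetic: your recursion with the sets $Q_n$ makes explicit the discreteness that the paper obtains automatically from pairwise disjoint open neighborhoods $U_n\supseteq F_n$, and you write out the two verifications (that (b) plus disjointness yields the separation condition, and that functions vanishing on $F$ decay uniformly on the $F_n$) that the paper leaves to the reader.
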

\begin{proof}
From (b) and the fact that the $F_n$ are pairwise disjoint it follows that \eqref{eq:discretefamily} holds. Let $(U_n)_{n\in\omega}$, $(V_n)_{n\in\omega}$, and $(\tilde f_{n,\alpha})_{n\in\omega,\alpha\in\mathfrak c}$ be as in the proof of Lemma~\ref{thm:fechapracima}; we assume also that $\overline V_n\cap F=\emptyset$, for all $n\in\omega$. Let $\tilde\gamma_{n,\alpha}\in M(K)$ be the
extension of $\gamma_{n,\alpha}\in M(F_n)$ that vanishes identically outside of $F_n$. We have that $(\tilde f_{n,\alpha},\tilde\gamma_{n,\alpha})_{n\in\omega,\alpha\in\mathfrak c}$ is a bounded biorthogonal system in $C(K)$ and that $(\tilde\gamma_{n,\alpha})_{\alpha\in\mathfrak c}$ is weak*-null for each $n$, though it is not true in general that the entire family $(\tilde\gamma_{n,\alpha})_{n\in\omega,\alpha\in\mathfrak c}$ is weak*-null. In order to take care of this problem, let $P:C(K)\to C(K|F)$ be a bounded projection and set $\hat\gamma_{n,\alpha}=\tilde\gamma_{n,\alpha}\circ P$. Since all $\tilde f_{n,\alpha}$ are in $C(K|F)$, we have that
$(\tilde f_{n,\alpha},\hat\gamma_{n,\alpha})_{n\in\omega,\alpha\in\mathfrak c}$ is biorthogonal. To prove that $(\hat\gamma_{n,\alpha})_{n\in\omega,\alpha\in\mathfrak c}$ is weak*-null, note that (b) implies that $\lim_{n\to+\infty}\Vert f\vert_{F_n}\Vert=0$, for
all $f\in C(K|F)$.
\end{proof}

\begin{cor}\label{thm:sequencebiorthogonal}
Let $K$ be a compact Hausdorff space. If $C(K)$ admits a bounded weak*-null biorthogonal system of cardinality $\mathfrak c$, then
the space $[0,\omega]\times K$ satisfies property ($*$). In particular, $L\times K$ satisfies property ($*$) for every compact Hausdorff space
$L$ containing a nontrivial convergent sequence.
\end{cor}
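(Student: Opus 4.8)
The plan is to apply Lemma~\ref{thm:abc} to the space $[0,\omega]\times K$ with the natural slicing $F_n=\{n\}\times K$ for $n\in\omega$ and $F=\{\omega\}\times K$. These are closed subsets, the $F_n$ are pairwise disjoint, and $[0,\omega]\times K$ is compact Hausdorff, so it remains only to verify the three hypotheses (a), (b), (c) of that lemma.

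I would dispatch (b) first by computing the cluster points of $(F_n)_{n\in\omega}$. Each finite $n$ is isolated in $[0,\omega]$, so a point $(m,k)$ with $m<\omega$ has a basic neighborhood $\{m\}\times V$ meeting only $F_m$ and is therefore not a cluster point; on the other hand, every neighborhood of $\omega$ contains all sufficiently large $n$, so each $(\omega,k)$ is a cluster point. Hence the set of cluster points of $(F_n)_{n\in\omega}$ is exactly $F$, and $F_n\cap F=\emptyset$ is clear. For (a), note that $F=\{\omega\}\times K$ is homeomorphic to $K$ via the projection onto $K$, so I would define $E\colon C(F)\to C([0,\omega]\times K)$ by $(Eg)(x,k)=g(\omega,k)$, that is, by transporting $g$ to $C(K)$ and precomposing with the second projection. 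This yields a continuous function, $E$ is linear and isometric, and $(Eg)|_F=g$, so $E$ is a bounded right inverse of the restriction operator and $F$ admits an extension operator. For (c), I would take the given bounded weak*-null biorthogonal system $(f_\alpha,\gamma_\alpha)_{\alpha\in\mathfrak c}$ in $C(K)$ and transport it through the fixed homeomorphism $F_n\cong K$ to obtain $(f_{n,\alpha},\gamma_{n,\alpha})_{\alpha\in\mathfrak c}$ on each slice; since the homeomorphism does not vary with $n$, the uniform norm bounds are inherited, and each slice system is weak*-null in $C(F_n)$. Note that hypothesis (c) only demands weak*-nullity of each individual slice family, which is precisely what this construction provides. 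Lemma~\ref{thm:abc} then gives that $[0,\omega]\times K$ satisfies property ($*$).

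For the \emph{in particular} clause, a nontrivial convergent sequence in $L$ provides a convergent sequence of infinitely many distinct terms $x_n\to x_\infty$; discarding any term equal to the limit, I would set $S=\{x_n:n\in\omega\}\cup\{x_\infty\}$. Using that the only cluster point of a convergent sequence in a Hausdorff space is its limit, $S$ is closed, hence compact, and each $x_n$ is isolated in $S$, so $S$ is a countable compact space with a single non-isolated point and is therefore homeomorphic to $[0,\omega]$. Consequently $S\times K$ is a closed subspace of $L\times K$ homeomorphic to $[0,\omega]\times K$; by the first part it satisfies property ($*$), and Lemma~\ref{thm:fechapracima} then propagates property ($*$) to $L\times K$.

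There is no deep obstacle here; the content is entirely in assembling the hypotheses of Lemma~\ref{thm:abc}. The points requiring the most care are the clean construction of the extension operator in (a) (verifying continuity on the product and the right-inverse identity) and the topological verification that a nontrivial convergent sequence genuinely yields a \emph{closed} copy of $[0,\omega]$ inside $L$, which rests on the observation that isolation of the $x_n$ and closedness of $S$ both follow from the Hausdorff separation of the limit from the tail.
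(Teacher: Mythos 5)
Your proposal is correct and takes essentially the same route as the paper: the paper also sets $F_n=\{n\}\times K$, $F=\{\omega\}\times K$ and gets the extension operator from the fact that $F$ is a retract of $[0,\omega]\times K$ (your $E$ is precisely composition with that retraction), with the slice systems transported from $C(K)$ exactly as you do. Your handling of the \emph{in particular} clause---a nontrivial convergent sequence gives a closed copy of $[0,\omega]\times K$ inside $L\times K$, then Lemma~\ref{thm:fechapracima}---is also the reasoning the paper intends.
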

\begin{proof}
Take $F_n=\{n\}\times K$, $F=\{\omega\}\times K$, and use the fact that $F$ is a retract of $[0,\omega]\times K$ and thus admits an extension operator in $[0,\omega]\times K$.
\end{proof}

\begin{cor}\label{thm:0omega0c}
The spaces $[0,\omega]\times[0,\mathfrak c]$ and $2^\mathfrak c$ satisfy property ($*$). In particular, a product of at least $\mathfrak c$ compact Hausdorff spaces with more than one point satisfies property ($*$).
\end{cor}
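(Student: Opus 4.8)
The plan is to derive everything from Corollary~\ref{thm:sequencebiorthogonal} together with Lemma~\ref{thm:fechapracima}. For the two named spaces it suffices, by Corollary~\ref{thm:sequencebiorthogonal}, to produce in each of $C([0,\mathfrak c])$ and $C(2^{\mathfrak c})$ a bounded weak*-null biorthogonal system of cardinality $\mathfrak c$; the final assertion about arbitrary large products will then follow by exhibiting inside such a product a closed copy of $2^{\mathfrak c}$ and invoking Lemma~\ref{thm:fechapracima}.

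First I would treat $[0,\omega]\times[0,\mathfrak c]$. Since $[0,\mathfrak c]$ is scattered, its dual is purely atomic, so I look for the system among differences of point masses. Let $\Lambda$ be the set of ordinals $\lambda<\mathfrak c$ that are either $0$ or limit ordinals; since $\mathfrak c$ is an infinite cardinal, $\vert\Lambda\vert=\mathfrak c$. For $\lambda\in\Lambda$ let $f_\lambda$ be the characteristic function of the isolated point $\lambda+1$ and put $\gamma_\lambda=\delta_{\lambda+1}-\delta_\lambda$. Biorthogonality is immediate once one notes that the predecessor of any $\lambda'+1$ is the limit/zero ordinal $\lambda'$, so of the two point masses defining $\gamma_{\lambda'}$ only the term $\delta_{\lambda'+1}$ can hit the isolated successor $\lambda+1$; hence $\gamma_{\lambda'}(f_\lambda)=\delta_{\lambda\lambda'}$. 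Boundedness is clear, as $\Vert f_\lambda\Vert=1$ and $\Vert\gamma_\lambda\Vert=2$. The crux is weak*-nullity: given $g\in C([0,\mathfrak c])$ and $\epsilon>0$ I must show that $\{\lambda:\vert g(\lambda+1)-g(\lambda)\vert\ge\epsilon\}$ is finite. If it were infinite it would contain a strictly increasing sequence $\lambda_1<\lambda_2<\cdots$ with supremum $\mu$; then both $\lambda_n+1\to\mu$ and $\lambda_n\to\mu$, so continuity of $g$ at $\mu$ forces $g(\lambda_n+1)-g(\lambda_n)\to0$, contradicting the lower bound $\epsilon$. This gives the hypothesis of Corollary~\ref{thm:sequencebiorthogonal} with $K=[0,\mathfrak c]$, so $[0,\omega]\times[0,\mathfrak c]$ satisfies property~($*$).

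Next I would treat $2^{\mathfrak c}$. Writing points as $x=(x_\alpha)_{\alpha<\mathfrak c}$, let $0$ be the identically zero point and $e_\alpha$ the point that is $1$ at coordinate $\alpha$ and $0$ elsewhere. Taking $f_\alpha$ to be the $\alpha$-th coordinate projection and $\gamma_\alpha=\delta_{e_\alpha}-\delta_0$ gives a bounded biorthogonal system of cardinality $\mathfrak c$, since $f_\beta(e_\alpha)=\delta_{\alpha\beta}$ and $f_\beta(0)=0$. For weak*-nullity, if $\{\alpha:\vert g(e_\alpha)-g(0)\vert\ge\epsilon\}$ were infinite for some continuous $g$, I would pick distinct $\alpha_1,\alpha_2,\dots$ in it; then $e_{\alpha_n}\to0$ (every basic neighbourhood of $0$ is determined by finitely many coordinates and hence contains all but finitely many of the $e_{\alpha_n}$), and continuity of $g$ gives $g(e_{\alpha_n})\to g(0)$, a contradiction. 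Since $2^{\mathfrak c}\cong2^{\omega}\times2^{\mathfrak c}$ and $2^{\omega}$ contains a nontrivial convergent sequence, the ``in particular'' clause of Corollary~\ref{thm:sequencebiorthogonal} (with $L=2^{\omega}$ and $K=2^{\mathfrak c}$) yields that $2^{\mathfrak c}$ satisfies property~($*$).

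Finally, for a product $\prod_{i\in I}X_i$ with $\vert I\vert\ge\mathfrak c$ and each $X_i$ containing two distinct points $a_i\ne b_i$, I would fix $I_0\subset I$ with $\vert I_0\vert=\mathfrak c$ and consider the set of those $x$ with $x_i\in\{a_i,b_i\}$ for $i\in I_0$ and $x_i=a_i$ for $i\notin I_0$. Each $\{a_i,b_i\}$ is a closed two-point discrete subspace, so this set is a closed copy of $2^{I_0}\cong2^{\mathfrak c}$ inside the product; since $2^{\mathfrak c}$ satisfies property~($*$), Lemma~\ref{thm:fechapracima} shows the whole product does as well. The main obstacle throughout is the verification of weak*-nullity for the ordinal space, namely the fact that a continuous function on $[0,\mathfrak c]$ admits only finitely many jumps of size $\ge\epsilon$ across successor steps; everything else is bookkeeping with the tools already established.
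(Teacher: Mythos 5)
Your proof is correct, and its overall skeleton---produce a bounded weak*-null biorthogonal system of cardinality $\mathfrak c$, feed it into Corollary~\ref{thm:sequencebiorthogonal}, and exploit $2^{\mathfrak c}\cong2^\omega\times2^{\mathfrak c}$---matches the paper's; the genuine difference is in how $2^{\mathfrak c}$ is handled. The paper constructs a system only in $C\big([0,\mathfrak c]\big)$, namely $(\chilow{[0,\alpha]},\delta_\alpha-\delta_{\alpha+1})_{\alpha\in\mathfrak c}$ (your system $\big(\chilow{\{\lambda+1\}},\delta_{\lambda+1}-\delta_\lambda\big)$, indexed by limit ordinals, is a minor variant of this), and then gets the Cantor cube for free: the map $\alpha\mapsto\chilow\alpha$ embeds $[0,\mathfrak c]$ into $2^{\mathfrak c}$, so $2^{\mathfrak c}\cong2^\omega\times2^{\mathfrak c}$ contains a closed copy of $[0,\omega]\times[0,\mathfrak c]$ and Lemma~\ref{thm:fechapracima} finishes the argument. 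You instead build a second, self-contained system in $C(2^{\mathfrak c})$---coordinate functions paired with $\delta_{e_\alpha}-\delta_0$---and invoke Corollary~\ref{thm:sequencebiorthogonal} a second time with $L=2^\omega$. Both routes are valid: the paper's reduction is more economical, requiring a single construction and no separate weak*-nullity check for the cube, while your version makes the cube case independent of ordinal spaces and spells out the verifications the paper leaves to the reader (the ``finitely many $\epsilon$-jumps'' argument on $[0,\mathfrak c]$ and the convergence $e_{\alpha_n}\to0$ in $2^{\mathfrak c}$). Your final paragraph---a closed copy of $2^{I_0}$ inside a large product, followed by Lemma~\ref{thm:fechapracima}---is exactly the argument behind the paper's unproved ``in particular'' clause, so it is a welcome addition rather than a deviation.
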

\begin{proof}
The family $(\chilow{[0,\alpha]},\delta_\alpha-\delta_{\alpha+1})_{\alpha\in\mathfrak c}$ is a bounded weak*-null biorthogonal system in $C\big([0,\mathfrak c]\big)$, where $\delta_\alpha\in M\big([0,\mathfrak c]\big)$ denotes the probability measure with support $\{\alpha\}$.
It follows from Corollary~\ref{thm:sequencebiorthogonal} that $[0,\omega]\times[0,\mathfrak c]$ satisfies property ($*$). To see that $2^\mathfrak c$ also does, note that the map $[0,\mathfrak c]\ni\alpha\mapsto\chilow\alpha\in2^\mathfrak c$ embeds $[0,\mathfrak c]$ into $2^\mathfrak c$, so that $2^{\mathfrak c}\cong2^\omega\times2^{\mathfrak c}$ contains a homeomorphic copy of $[0,\omega]\times[0,\mathfrak c]$.
\end{proof}

Recall that a {\em projectional resolution of the identity\/} (PRI) of a Banach space $X$ is a family $(P_\alpha)_{\omega\le\alpha\le\dens(X)}$ of
projection operators $P_\alpha:X\to X$ satisfying the following conditions:
\begin{itemize}
\item $\Vert P_\alpha\Vert=1$, for $\omega\le\alpha\le\dens(X)$;
\item $P_{\dens(X)}$ is the identity of $X$;
\item $P_\alpha[X]\subset P_\beta[X]$ and $\Ker(P_\beta)\subset\Ker(P_\alpha)$, for $\omega\le\alpha\le\beta\le\dens(X)$;
\item $P_\alpha(x)=\lim_{\beta<\alpha}P_\beta(x)$, for all $x\in X$, if $\omega<\alpha\le\dens(X)$ is a limit ordinal;
\item $\dens\big(P_\alpha[X]\big)\le\vert\alpha\vert$, for $\omega\le\alpha\le\dens(X)$.
\end{itemize}
We call the PRI {\em strictly increasing\/} if $P_\alpha[X]$ is a proper subspace of $P_\beta[X]$, for $\omega\le\alpha<\beta\le\dens(X)$.

\begin{cor}\label{thm:PRIstrict}
Let $K$ and $L$ be compact Hausdorff spaces such that $L$ contains a nontrivial convergent sequence and $w(K)\ge\mathfrak c$. If $C(K)$ admits a strictly increasing PRI, then the space $L\times K$ satisfies property ($*$).
\end{cor}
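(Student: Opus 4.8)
The plan is to reduce everything to Corollary~\ref{thm:sequencebiorthogonal}: since $L$ contains a nontrivial convergent sequence, its ``in particular'' clause shows that it suffices to produce a bounded weak*-null biorthogonal system of cardinality $\mathfrak c$ in $C(K)$. First I would record that $\dens\big(C(K)\big)=w(K)\ge\mathfrak c$, so that the given strictly increasing PRI $(P_\alpha)_{\omega\le\alpha\le\dens(C(K))}$ is indexed by an ordinal interval of cardinality at least $\mathfrak c$; all the work then goes into manufacturing the biorthogonal system out of the consecutive differences of this PRI.

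For $\omega\le\alpha<\dens(C(K))$ set $Q_\alpha=P_{\alpha+1}-P_\alpha$. From the PRI axioms one gets $P_\alpha P_\beta=P_\beta P_\alpha=P_{\min(\alpha,\beta)}$ (for $\alpha\le\beta$, the inclusion $P_\alpha[X]\subset P_\beta[X]$ gives $P_\beta P_\alpha=P_\alpha$, and $\Ker P_\beta\subset\Ker P_\alpha$ gives $P_\alpha P_\beta=P_\alpha$), and from this I would check that each $Q_\alpha$ is a projection with $\Vert Q_\alpha\Vert\le2$ and that $Q_\alpha Q_\beta=0$ whenever $\alpha\ne\beta$. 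Strict monotonicity of the PRI forces $Q_\alpha\ne0$, so I may pick $f_\alpha$ in the range of $Q_\alpha$ with $\Vert f_\alpha\Vert=1$ and, by Hahn--Banach, a norming functional $\gamma'_\alpha$ with $\Vert\gamma'_\alpha\Vert=\gamma'_\alpha(f_\alpha)=1$; setting $\gamma_\alpha=\gamma'_\alpha\circ Q_\alpha$ gives $\Vert\gamma_\alpha\Vert\le2$. Biorthogonality is then immediate: $\gamma_\alpha(f_\alpha)=\gamma'_\alpha(Q_\alpha f_\alpha)=\gamma'_\alpha(f_\alpha)=1$, while for $\alpha\ne\beta$ we have $Q_\alpha f_\beta=Q_\alpha Q_\beta f_\beta=0$, hence $\gamma_\alpha(f_\beta)=0$.

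The crux --- and the step I expect to be the main obstacle --- is verifying that the family $(\gamma_\alpha)$ is \emph{weak*-null}, since it is exactly here that the limit-ordinal continuity axiom of the PRI is needed. Because $\vert\gamma_\alpha(f)\vert\le\Vert Q_\alpha f\Vert$ for every $f\in C(K)$, it is enough to show that $\{\alpha:\Vert Q_\alpha f\Vert\ge\epsilon\}$ is finite for each $f$ and each $\epsilon>0$. Assuming otherwise, I would extract a strictly increasing sequence $\alpha_1<\alpha_2<\cdots$ with $\Vert Q_{\alpha_n}f\Vert\ge\epsilon$ and put $\gamma=\sup_n\alpha_n$, a limit ordinal of countable cofinality with $\omega<\gamma\le\dens(C(K))$. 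Both sequences $(\alpha_n)$ and $(\alpha_n+1)$ are cofinal in $\gamma$, so the continuity axiom $P_\gamma f=\lim_{\beta<\gamma}P_\beta f$ yields $P_{\alpha_n}f\to P_\gamma f$ and $P_{\alpha_n+1}f\to P_\gamma f$, whence $Q_{\alpha_n}f\to0$, contradicting $\Vert Q_{\alpha_n}f\Vert\ge\epsilon$. Finally, choosing any index set $I\subset[\omega,\dens(C(K)))$ with $\vert I\vert=\mathfrak c$, the subfamily $(f_\alpha,\gamma_\alpha)_{\alpha\in I}$ is a bounded weak*-null biorthogonal system of cardinality $\mathfrak c$ in $C(K)$, and Corollary~\ref{thm:sequencebiorthogonal} finishes the proof.
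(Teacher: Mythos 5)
Your proof is correct and takes essentially the same route as the paper: both reduce to Corollary~\ref{thm:sequencebiorthogonal} and build the bounded weak*-null biorthogonal system by choosing a unit vector in the range of $P_{\alpha+1}-P_\alpha$ (equivalently, in $P_{\alpha+1}[C(K)]\cap\Ker(P_\alpha)$) together with the functional $\phi_\alpha\circ(P_{\alpha+1}-P_\alpha)$ for a norming $\phi_\alpha$. The only difference is that you spell out the details (the commutation relations, the cofinal-sequence argument for weak*-nullity via the limit-ordinal continuity axiom) that the paper compresses into a single ``observing'' clause, and these details are all verified correctly.
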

\begin{proof}
This follows from Corollary~\ref{thm:sequencebiorthogonal} by observing that if a Banach space $X$ admits a strictly increasing PRI, then $X$ admits a weak*-null biorthogonal system $(x_\alpha,\gamma_\alpha)_{\omega\le\alpha<\dens(X)}$ with $\Vert x_\alpha\Vert=1$ and $\Vert\gamma_\alpha\Vert\le2$, for all $\alpha$. Namely, pick a unit vector $x_\alpha$ in  $P_{\alpha+1}[X]\cap\Ker(P_\alpha)$ and set $\gamma_\alpha=\phi_\alpha\circ(P_{\alpha+1}-P_\alpha)$, where $\phi_\alpha\in X^*$ is a norm-one functional satisfying $\phi_\alpha(x_\alpha)=1$.
\end{proof}

\end{section}

\begin{section}{Nontrivial twisted sums for Corson compacta}
\label{sec:Valdivia}

Let us recall some standard definitions. Given an index set $I$, we write $\Sigma(I)=\big\{x\in\R^I:\text{$\supp x$ is countable}\big\}$, where the support $\supp x$ of $x$ is defined by $\supp x=\big\{i\in I:x_i\ne0\big\}$. Given a compact Hausdorff space $K$, we call $A$
a {\em $\Sigma$-subset\/} of $K$ if there exist an index set $I$ and a continuous injection $\varphi:K\to\R^I$ such that $A=\varphi^{-1}[\Sigma(I)]$.
The space $K$ is called a {\em Valdivia compactum\/} if it admits a dense $\Sigma$-subset and it is called a {\em Corson compactum\/} if $K$ is a $\Sigma$-subset of itself. This section will be dedicated to the proof of the following result.

\begin{teo}\label{thm:corCorson}
If $K$ is a Corson compact space with $w(K)\ge\mathfrak c$, then $\Extc{C(K)}\ne0$. In particular, under CH, we have $\Extc{C(K)}\ne0$ for any nonmetrizable Corson compact space $K$.
\end{teo}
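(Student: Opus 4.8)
The plan is to feed the machinery of Section~\ref{sec:main} with a biorthogonal system extracted from the internal structure of $K$. By Lemma~\ref{thm:fechapracima} it suffices to exhibit a closed subspace of $K$ satisfying property~($*$), and by Corollary~\ref{thm:produtozero} it is in fact enough to produce in $C(K)$ a bounded weak*-null biorthogonal system $(f_{n,\alpha},\gamma_{n,\alpha})_{n\in\omega,\alpha\in\mathfrak c}$ with $f_{n,\alpha}f_{m,\beta}=0$ whenever $n\ne m$; equivalently, the cozero sets of functions carrying different indices $n$ should be disjoint. The two available engines are thus Lemma~\ref{thm:abc} (a discrete sequence of ``fat'' blocks, each carrying a full $\mathfrak c$-indexed system) and the more flexible Corollary~\ref{thm:produtozero}, which, as the one-point compactification of a discrete set of size $\mathfrak c$ already shows, can succeed in cases where property~($*$) itself fails. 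I expect to need both, selected by a dichotomy.

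First I would collect the relevant structure of Corson compacta. Fixing an irreducible embedding $K\hookrightarrow\Sigma(I)$ with $\vert I\vert=w(K)\ge\mathfrak c$, the coordinate projections give a separating family $(\pi_i)_{i\in I}$ of continuous functions that is \emph{point-countable}: since every $x\in K$ has countable support, $\pi_i(x)\ne0$ for only countably many $i$. In parallel, $C(K)$ admits a PRI, which for $w(K)\ge\mathfrak c$ may be taken strictly increasing of length $w(K)$; as in the proof of Corollary~\ref{thm:PRIstrict}, this yields a weak*-null biorthogonal system of cardinality $w(K)\ge\mathfrak c$ with $\Vert x_\alpha\Vert=1$ and $\Vert\gamma_\alpha\Vert\le2$, from which I extract a subfamily indexed by $\mathfrak c$. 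I would also record that closed subspaces of $K$ are again Corson, and that the retracts furnished by the retractional skeleton underlying the PRI admit extension operators (if $r\colon K\to F$ is a retraction then $f\mapsto f\circ r$ is a right inverse of the restriction), supplying hypothesis~(a) of Lemma~\ref{thm:abc}.

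The heart of the argument is to reorganize this size-$\mathfrak c$ system into the $\omega\times\mathfrak c$ pattern with the required cross-index disjointness, and here I would split into cases. In the ``spread-out'' case, when one can find $\omega$ pairwise disjoint open sets each containing the cozero sets of $\mathfrak c$-many of the $\pi_i$, the functions supported in different open sets have vanishing products, and after normalization and the weak*-null projection trick of Lemma~\ref{thm:abc} (replacing $\gamma$ by $\gamma\circ P$ for a suitable projection) one obtains a system as in Corollary~\ref{thm:produtozero}. In the complementary ``concentrated'' case I would instead locate a sequence $(F_n)_{n\in\omega}$ of pairwise disjoint closed subsets of weight $\ge\mathfrak c$ clustering into a single retract $F$, using countable tightness of $K$ to secure the convergent behavior and the point-countability of $(\pi_i)$ to keep the $F_n$ discrete in the sense of \eqref{eq:discretefamily}; each $F_n$, being Corson of weight $\ge\mathfrak c$, carries its own bounded weak*-null $\mathfrak c$-system, so Lemma~\ref{thm:abc} applies.

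The main obstacle is precisely this disjointification/discretization step: the cozero sets of a point-countable family may nonetheless intersect pairwise in a heavy way, so neither a disjoint $\omega$-coloring with classes of size $\mathfrak c$ nor a discrete sequence of fat blocks is automatic, and the delicate part is to delineate the two cases so that exactly one always applies — this is where $w(K)\ge\mathfrak c$ must supply the necessary room. Moreover, weak*-nullity of the \emph{global} family, not merely of each slice $(\gamma_{n,\alpha})_{\alpha\in\mathfrak c}$, has to be restored by composing with a projection onto a suitable kernel, exactly as in Lemma~\ref{thm:abc}. The stated consequence is then immediate: under CH a Corson compact is nonmetrizable if and only if $w(K)\ge\omega_1=\mathfrak c$, so the particular case follows from the general one.
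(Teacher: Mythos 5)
Your reduction to property ($*$) via Lemma~\ref{thm:fechapracima} and Corollary~\ref{thm:produtozero}, and your plan to feed Lemma~\ref{thm:abc} with PRI-generated biorthogonal systems, are consistent with the machinery of Section~\ref{sec:main}; but the proof does not close, because the step you yourself flag as ``the main obstacle'' --- formulating a dichotomy (``spread-out'' versus ``concentrated'') so that one branch always applies --- is precisely the missing mathematical content, and neither branch is made precise enough to be checked. In the ``spread-out'' case, disjointness of cozero sets of coordinate functions does not by itself yield a biorthogonal system (you still need functionals, biorthogonality, and weak*-nullity of the \emph{whole} family, and the projection trick of Lemma~\ref{thm:abc} requires an extension operator for the set of cluster points, which you have not produced); in the ``concentrated'' case you need pairwise disjoint closed sets $F_n$ of weight $\ge\mathfrak c$ clustering into a set admitting an extension operator, and you give no mechanism for finding them.

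The paper resolves exactly this difficulty with two ingredients absent from your proposal. First, the operative dichotomy is ccc versus non-ccc: if $K$ fails ccc, then $\Extc{C(K)}\ne0$ is already known for Valdivia (hence Corson) compacta, so only the ccc case needs an argument. Second, in the ccc case, Lemma~\ref{thm:cccH} localizes to the regular closed (hence still Corson) set $H=\big\{x\in K:w(x,K)\ge\mathfrak c\big\}$, which is nonempty and in which every point has local weight $\ge\mathfrak c$; then Kalenda's theorem that every nonempty Corson compact admits a $G_\delta$ point supplies the set $F=\{x\}$: it is closed, $G_\delta$, non-open (no point of $H$ is isolated, since local weights are $\ge\mathfrak c$), and a singleton trivially admits an extension operator (extend by constants) --- no appeal to retractional skeletons is needed. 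Lemma~\ref{thm:lemanonopen} then manufactures the pairwise disjoint regular closed sets $F_n$ clustering only at $x$; regularity guarantees $w(F_n)\ge\mathfrak c$ (each $F_n$ contains a nonempty open set, and local weights in $H$ are $\ge\mathfrak c$), and each $F_n$, being Corson, carries a bounded weak*-null biorthogonal system of cardinality $\mathfrak c$ via a strictly increasing PRI, so Lemma~\ref{thm:abc} (packaged as Corollary~\ref{thm:Gdeltastar}) applies. The $G_\delta$-point theorem combined with the weight localization of Lemma~\ref{thm:cccH} is the engine your dichotomy was searching for; without it, your argument remains a plan rather than a proof. (Your final observation, that under CH nonmetrizability is equivalent to $w(K)\ge\mathfrak c$, is correct and matches the paper.)
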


The fact that $\Extc{C(K)}\ne0$ for a Valdivia compact space $K$ which does not have ccc is already known (\cite[Proposition~2]{Castilloscattered}).
Our strategy for the proof of Theorem~\ref{thm:corCorson} is to use Lemma~\ref{thm:abc} to show that if $K$ is a Corson compact space with $w(K)\ge\mathfrak c$ having ccc, then $K$ satisfies property ($*$). We start with a lemma that will be used as a tool for verifying the assumptions of Lemma~\ref{thm:abc}.
Recall that a closed subset of a topological space is called {\em regular\/} if it is the closure of an open set (equivalently, if it is the closure
of its own interior). Obviously, a closed subset of a Corson compact space is again Corson and a regular closed subset of a Valdivia compact space is again
Valdivia.
\begin{lem}\label{thm:lemanonopen}
Let $K$ be a compact Hausdorff space and $F$ be a closed non-open $G_\delta$ subset of $K$. Then there exists a sequence $(F_n)_{n\in\omega}$
of nonempty pairwise disjoint regular closed subsets of $K$ such that condition~(b) in the statement of Lemma~\ref{thm:abc} holds.
\end{lem}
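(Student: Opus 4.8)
The plan is to build the sets $F_n$ as closures of small open sets squeezed into successive ``shells'' around a single boundary point of $F$, using the $G_\delta$ hypothesis to force all cluster points into $F$ and the non-openness of $F$ to guarantee that the shells meet $K\setminus F$.

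First, since $F$ is a $G_\delta$ and $K$ is compact Hausdorff (hence normal), I would write $F=\bigcap_{n\in\omega}U_n$ where the $U_n$ are open, decreasing, and satisfy the shrinking condition $\overline{U_{n+1}}\subset U_n$; this is arranged by a routine recursive application of normality, choosing $U_{n+1}$ with $F\subset U_{n+1}\subset\overline{U_{n+1}}\subset U_n\cap W_{n+1}$ from an arbitrary $G_\delta$ presentation $F=\bigcap_n W_n$. A short computation then gives $\bigcap_n\overline{U_n}=F$, which is the source of all cluster-point control: a set contained in $U_n$ for large $n$ can only cluster inside $F$.

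Since $F$ is closed and not open, its boundary $F\setminus\Int F=F\cap\overline{K\setminus F}$ is nonempty; I fix a point $p$ in it, so that $p\in F$ and every neighborhood of $p$ meets $K\setminus F$. I then construct recursively a decreasing sequence of open neighborhoods $N_n$ of $p$ with $N_0\subset U_0$ and $\overline{N_{n+1}}\subset N_n\cap U_{n+1}$, arranging at each step that the shell $S_n=N_n\setminus\overline{N_{n+1}}$ contains a point $q_n\notin F$. This is where the hypotheses are genuinely used: because $p\in\overline{K\setminus F}$, the neighborhood $N_n$ contains some $q_n\notin F$; since $q_n\ne p$, the Hausdorff property lets me take $N_{n+1}$ small enough to exclude $q_n$, so that $q_n$ falls into the shell $S_n$. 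Using regularity I enclose $q_n$ in an open set $V_n$ with $\overline{V_n}\subset S_n\setminus F$ and set $F_n=\overline{V_n}$.

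The three required properties are then essentially bookkeeping. Each $F_n$ is the closure of an open set, hence a nonempty regular closed set, and $F_n\subset K\setminus F$, so $F_n\cap F=\emptyset$. The shells are pairwise disjoint (for $n<m$ one has $S_m\subset N_m\subset\overline{N_{n+1}}$, which is disjoint from $S_n$), whence the $F_n$ are pairwise disjoint. Finally $F_n\subset N_n\subset U_n$, so any cluster point $x$ of $(F_n)$ lies in $\overline{U_m}$ for every $m$ and therefore in $\bigcap_m\overline{U_m}=F$; this is exactly condition~(b). The one delicate point, which I expect to be the main obstacle, is the recursive construction of the shells: I must simultaneously shrink the $N_n$ toward $F$ to obtain the cluster control while keeping each shell large enough to capture a point outside $F$, and it is precisely the non-openness of $F$ witnessed at the fixed boundary point $p$ that reconciles these two demands.
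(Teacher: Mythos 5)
Your proof is correct and follows essentially the same strategy as the paper: write $F$ as a decreasing intersection of open sets with nested closures, place nonempty regular closed sets in the annular shells between consecutive levels, and use the nesting to force all cluster points of the sequence into $F$. The only real difference is cosmetic: you localize the shells around a single boundary point of $F$ (and must then explicitly shrink away from $F$ when choosing $F_n$), whereas the paper works with the full annuli $V_n\setminus\overline{V_{n+1}}$, which are automatically disjoint from $F$ and are nonempty because the non-openness of $F$ allows the inclusions $\overline{V_{n+1}}\subset V_n$ to be made proper.
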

\begin{proof}
We can write $F=\bigcap_{n\in\omega}V_n$, with each $V_n$ open in $K$ and $\overline{V_{n+1}}$ properly contained in $V_n$.
Set $U_n=V_n\setminus\overline{V_{n+1}}$, so that all cluster points of $(U_n)_{n\in\omega}$ are in $F$. To conclude the proof, let $F_n$ be a nonempty regular closed set contained in $U_n$.
\end{proof}

Once we get the closed sets $(F_n)_{n\in\omega}$ from Lemma~\ref{thm:lemanonopen}, we still have to verify the rest of the conditions in the statement of Lemma~\ref{thm:abc}. First, we need an assumption ensuring that $w(F_n)\ge\mathfrak c$, for all $n$. To this aim, given a point $x$ of a topological space $\mathcal X$, we define the {\em weight of $x$ in $\mathcal X$\/} by:
\[w(x,\mathcal X)=\min\big\{w(V):\text{$V$ neighborhood of $x$ in $\mathcal X$}\big\}.\]
Recall that if $K$ is a Valdivia compact space, then $C(K)$ admits a PRI (\cite[Theorem~2]{Valdivia}). Moreover, a trivial adaptation of the proof in \cite{Valdivia} shows in fact that $C(K)$ admits a strictly increasing PRI. Thus, by the argument in the proof of
Corollary~\ref{thm:PRIstrict}, $C(K)$ admits a weak*-null biorthogonal system $(f_\alpha,\gamma_\alpha)_{\omega\le\alpha<w(K)}$ such that
$\Vert f_\alpha\Vert\le1$ and $\Vert\gamma_\alpha\Vert\le2$, for all $\alpha$. The following result is now immediately obtained.
\begin{cor}\label{thm:Gdeltastar}
Let $K$ be a Valdivia compact space such that $w(x,K)\ge\mathfrak c$, for all $x\in K$. Assume that there exists a closed non-open $G_\delta$ subset $F$ admitting an extension operator in $K$. Then $K$ satisfies property ($*$).\qed
\end{cor}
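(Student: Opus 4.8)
The plan is to read off the hypotheses of Lemma~\ref{thm:abc} from the data at hand and then apply that lemma verbatim. Condition~(a), that $F$ admits an extension operator in $K$, is nothing but the standing assumption of the statement, so it requires no work. For condition~(b), I would feed the closed non-open $G_\delta$ set $F$ into Lemma~\ref{thm:lemanonopen}: this yields a sequence $(F_n)_{n\in\omega}$ of nonempty pairwise disjoint \emph{regular} closed subsets of $K$ for which every cluster point of $(F_n)_{n\in\omega}$ lies in $F$ and $F_n\cap F=\emptyset$ for all $n$. Thus the first two conditions of Lemma~\ref{thm:abc} are secured immediately, and the whole argument reduces to producing the biorthogonal systems required by condition~(c).

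The substantive step is condition~(c), which demands, for each $n$, a weak*-null biorthogonal system in $C(F_n)$ of cardinality $\mathfrak c$, with the norms of vectors and functionals bounded uniformly in both $n$ and $\alpha$. Here I would use the regularity of $F_n$ decisively: being regular closed, $F_n$ is the closure of its own interior, which is therefore nonempty, and choosing a point $x$ in that interior makes $F_n$ a neighborhood of $x$ in $K$, whence $w(F_n)\ge w(x,K)\ge\mathfrak c$ by hypothesis. Moreover, as a regular closed subset of the Valdivia compactum $K$, each $F_n$ is again Valdivia, so the PRI construction recalled just before the statement applies to $C(F_n)$: it furnishes a weak*-null biorthogonal system $(f_\alpha,\gamma_\alpha)_{\omega\le\alpha<w(F_n)}$ in $C(F_n)$ with $\Vert f_\alpha\Vert\le1$ and $\Vert\gamma_\alpha\Vert\le2$. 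Since $w(F_n)\ge\mathfrak c$, the index set $[\omega,w(F_n))$ has cardinality at least $\mathfrak c$, so I can restrict to a subfamily $(f_{n,\alpha},\gamma_{n,\alpha})_{\alpha\in\mathfrak c}$ of cardinality $\mathfrak c$; passing to a subfamily preserves biorthogonality and weak*-nullity and does not increase norms. Because the bounds $1$ and $2$ are independent of $n$, both suprema in condition~(c) are finite, and (c) holds. Applying Lemma~\ref{thm:abc} then gives that $K$ satisfies property~($*$).

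I expect the only genuinely nonroutine point to be the weight estimate $w(F_n)\ge\mathfrak c$: this is exactly where the regularity of the sets $F_n$ produced by Lemma~\ref{thm:lemanonopen} (which supplies an interior point) meets the pointwise weight hypothesis $w(x,K)\ge\mathfrak c$, ensuring that each $C(F_n)$ is large enough to carry a biorthogonal system of cardinality $\mathfrak c$. Everything surrounding it — the extraction of $(F_n)_{n\in\omega}$, the Valdivia-heredity of regular closed sets, and the PRI-based construction of the uniformly bounded weak*-null systems — is a matter of quoting the preceding results, which explains why the corollary is ``immediately obtained.''
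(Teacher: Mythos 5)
Your proof is correct and is precisely the argument the paper intends when it calls the corollary ``immediately obtained'': condition (a) is the hypothesis, condition (b) comes from Lemma~\ref{thm:lemanonopen}, and condition (c) follows because each regular closed $F_n$ has an interior point (giving $w(F_n)\ge w(x,K)\ge\mathfrak c$), is itself Valdivia, and hence carries the uniformly bounded weak*-null biorthogonal system from the strictly increasing PRI. Nothing is missing; your identification of the weight estimate as the one nonroutine step is exactly right.
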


Assuming that $K$ has ccc, the next lemma allows us to reduce the proof of Theorem~\ref{thm:corCorson} to the case when $w(x,K)\ge\mathfrak c$, for all $x\in K$.
\begin{lem}\label{thm:cccH}
Let $K$ be a ccc Valdivia compact space and set:
\[H=\big\{x\in K:w(x,K)\ge\mathfrak c\big\}.\]
Then:
\begin{itemize}
\item[(a)] $H\ne\emptyset$, if $w(K)\ge\mathfrak c$;
\item[(b)] $w\big(K\setminus\Int(H)\big)<\mathfrak c$, where $\Int(H)$ denotes the interior of $H$;
\item[(c)] $H$ is a regular closed subset of $K$;
\item[(d)] $w(x,H)\ge\mathfrak c$, for all $x\in H$.
\end{itemize}
\end{lem}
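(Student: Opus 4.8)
The plan is to first record two elementary observations and then concentrate on (b), from which (c) and (d) will follow almost formally.

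I would begin by noting that $K\setminus H$ is open: if $w(x,K)<\mathfrak c$, then (shrinking an open neighbourhood of small weight by normality) $x$ has a \emph{closed} neighbourhood $W$ with $w(W)<\mathfrak c$, and every point of $\Int W$ also lies in $K\setminus H$. Hence $H$ is closed and, writing $G=K\setminus H$, we have $\Int(H)=K\setminus\overline G$, so $K\setminus\Int(H)=\overline G$. Part (a) is then immediate and uses neither ccc nor the Valdivia hypothesis: if $H=\emptyset$, I would cover $K$ by the interiors of the closed small-weight neighbourhoods $W_x$, extract a finite subcover $W_{x_1},\dots,W_{x_n}$, and use that $w=\mathrm{nw}$ for compact Hausdorff spaces together with the subadditivity of the network weight over finite unions to conclude $w(K)\le\max_j w(W_{x_j})<\mathfrak c$.

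The heart of the matter is (b), i.e.\ showing $w(\overline G)<\mathfrak c$. I want to stress at the outset that it does \emph{not} suffice to bound $\dens(\overline G)$: a Valdivia compactum can have density far below its weight (e.g.\ $[0,1]^{\mathfrak c}$ is separable but has weight $\mathfrak c$), so the weight must be controlled directly, and this is exactly where the $\Sigma$-structure is indispensable. Concretely, using ccc I would choose a maximal pairwise disjoint family $(V_i)_{i\in I}$ of nonempty open subsets of $K$ with $w(V_i)<\mathfrak c$; then $I$ is countable, and by maximality every nonempty open set of weight $<\mathfrak c$ meets $\bigcup_iV_i$, so, as each point of $G$ has such a neighbourhood, $\bigcup_iV_i$ is dense in $G$ and hence in $\overline G$. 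Since $\overline G$ is a regular closed subset of the Valdivia compactum $K$, it is itself Valdivia; fix an embedding $\overline G\subseteq\R^J$ with dense $\Sigma$-subset $A=\overline G\cap\Sigma(J)$ and, discarding the coordinates on which every point vanishes, assume each $j\in J$ is essential. For each $i$ I would take a base of $V_i$ of cardinality $w(V_i)<\mathfrak c$ and select one point of $A$ in each of its members, obtaining a set $D_i\subseteq A\cap V_i$ that is dense in $V_i$ with $|D_i|<\mathfrak c$. One checks that the coordinates not identically zero on $V_i$ are exactly $\bigcup_{x\in D_i}\supp x$, a union of fewer than $\mathfrak c$ countable sets, hence of cardinality $<\mathfrak c$. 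As $\bigcup_iV_i$ is dense and every coordinate is essential, $J=\bigcup_{i\in I}\{\,j:\pi_j|_{V_i}\not\equiv0\,\}$; being a countable union of sets of cardinality $<\mathfrak c$ and $\mathfrak c$ having uncountable cofinality, this gives $|J|<\mathfrak c$ and therefore $w(\overline G)\le|J|<\mathfrak c$.

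Finally, (c) and (d) follow from (b). For (c): $H$ is closed, and if some $x\in H$ had an open neighbourhood $V$ disjoint from $\Int(H)$, then $V\subseteq\overline G$ would give $w(V)\le w(\overline G)<\mathfrak c$, contradicting $x\in H$; thus $H=\overline{\Int(H)}$ is regular closed. For (d): if $y\in\Int(H)$, then neighbourhoods of $y$ in $H$ contained in $\Int(H)$ are open in $K$, so $w(y,H)=w(y,K)\ge\mathfrak c$; and if $x\in H\setminus\Int(H)$, then by (c) $x\in\overline{\Int(H)}$, so every neighbourhood $V\cap H$ of $x$ in $H$ meets $\Int(H)$ in some point $y$, and since $V\cap H$ is also a neighbourhood of $y$ in $H$ we get $w(V\cap H)\ge w(y,H)\ge\mathfrak c$. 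The main obstacle is precisely the weight bound in (b): the natural but doomed attempt is to argue through density, and the resolution is to run the coordinate-counting argument above, which exploits the countable supports of the $\Sigma$-subset fed by the small bases of the pieces $V_i$ extracted via ccc.
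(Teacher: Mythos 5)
Your proof is correct, and for (a)--(c) it follows essentially the paper's route: the finite-cover bound for (a); for (b), a maximal disjoint family of nonempty open sets of weight less than $\mathfrak c$ (countable by ccc), whose union is dense in $K\setminus H$ by maximality, followed by a count of the countable supports of a small dense set of points taken from a $\Sigma$-subset; and the same deduction of (c) from (b). The packaging of (b) differs slightly: the paper stays inside $K$, taking a dense subset $D$ of $A\cap U$ (where $A$ is a dense $\Sigma$-subset of $K$ and $U$ is the union of the antichain) with $\vert D\vert\le w(U)<\mathfrak c$, and noting that $\overline U=\overline D$ lies in the closed set of points supported on the union of the supports of $D$, hence embeds in a cube of weight $<\mathfrak c$; you instead pass to $\overline G$, invoke the fact that a regular closed subset of a Valdivia compactum is Valdivia (a fact the paper itself quotes, so this is legitimate, though the paper's version avoids needing it), and count essential coordinates piecewise over the $V_i$ --- the same counting in different clothes. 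The genuine divergence is (d): the paper applies Hodel's theorem (a compact Hausdorff space that is the union of at most $\kappa$ subsets of weight at most $\kappa$ has weight at most $\kappa$) to a closed neighborhood $V=(V\setminus H)\cup(V\cap H)$, using (b); you derive (d) from (c) alone, transferring the weight lower bound from interior points of $H$ to boundary points via the observation that an $H$-neighborhood of a boundary point contains a $K$-open set meeting $\Int(H)$ and is therefore a neighborhood of an interior point. Your version is more elementary and self-contained, since it dispenses with the cardinal-function lemma. Two small points to make explicit: each $V_i$ is contained in $G$ (immediate, since $V_i$ is a weight-$<\mathfrak c$ neighborhood of each of its points), so your coordinate count indeed takes place inside $\overline G$; and in (d) you should take $V$ open in $K$ (harmless, as weight is monotone under subspaces) or choose $y$ in $\Int(V)\cap\Int(H)$, since for a general neighborhood $V$ of $x$ the point $y$ could land on the boundary of $V$, where $V\cap H$ need not be a neighborhood of $y$.
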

\begin{proof}
If $H=\emptyset$, then $K$ can be covered by a finite number of open sets with weight less than $\mathfrak c$, so that $w(K)<\mathfrak c$. This proves (a).
To prove (b), let $(U_i)_{i\in I}$ be maximal among antichains of open subsets of $K$ with weight less than $\mathfrak c$. Since $I$ is countable and
$\mathfrak c$ has uncountable cofinality, we have that $U=\bigcup_{i\in I}U_i$ has weight less than $\mathfrak c$. From the maximality of $(U_i)_{i\in I}$,
it follows that $K\setminus H\subset\overline U$; then $K\setminus\Int(H)=\overline{K\setminus H}\subset\overline U$. To conclude the proof of (b), let us show that $w(\overline U)<\mathfrak c$. Let $A$ be a dense $\Sigma$-subset of $K$ and let $D$ be a dense subset of $A\cap U$ with $\vert D\vert\le w(U)$. Then $\overline D$ is homeomorphic to a subspace of $\R^{w(U)}$, so that $w(\overline U)=w(\overline D)\le w(U)<\mathfrak c$. To prove (c), note that $H$ is clearly closed; moreover, by (b), the open set $K\setminus\overline{\Int(H)}$ has weight less than $\mathfrak c$ and hence it is contained in $K\setminus H$.
Finally, to prove (d), let $V$ be a closed neighborhood in $K$ of some $x\in H$. By (b), we have $w(V\setminus H)<\mathfrak c$. Recall from \cite[p.\ 26]{Hodel} that if a compact Hausdorff space is the union of not more than $\kappa$ subsets of weight not greater than $\kappa$, then the weight of the space is not greater than $\kappa$. Since $w(V)\ge\mathfrak c$, it follows from such result that
$w(V\cap H)\ge\mathfrak c$.
\end{proof}

\begin{proof}[Proof of Theorem~\ref{thm:corCorson}]
By Lemma~\ref{thm:cccH}, it suffices to prove that if $K$ is a nonempty Corson compact space such that $w(x,K)\ge\mathfrak c$ for all $x\in K$, then $K$ satisfies property ($*$). Since a nonempty Corson compact space $K$ admits a $G_\delta$ point $x$ (\cite[Theorem~3.3]{Kalenda}), this fact follows from
Corollary~\ref{thm:Gdeltastar} with $F=\{x\}$.
\end{proof}

\begin{rem}
It is known that under Martin's Axiom (MA) and the negation of CH, every ccc Corson compact is metrizable (\cite{ArgyrosMartin}).
Thus, Theorem~\ref{thm:corCorson} implies that $\Extc{C(K)}\ne0$ for every nonmetrizable Corson compact space $K$ under MA.
\end{rem}

\end{section}

\begin{section}{Towards the general Valdivia case}\label{sec:Chuka}

In this section we prove that $\Extc{C(K)}\ne0$ for certain classes of nonmetrizable Valdivia compact spaces $K$ and we propose a strategy for dealing with the general problem. First, let us state some results which are immediate consequences of what we have done so far.

\begin{prop}\label{thm:productValdivia}
If $K$ is a Valdivia compact space with $w(K)\ge\mathfrak c$ and $L$ is a compact Hausdorff space containing a nontrivial convergent sequence, then $L\times K$ satisfies property ($*$).
\end{prop}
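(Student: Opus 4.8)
The plan is to read this proposition off Corollary~\ref{thm:PRIstrict}, whose hypotheses are nearly word-for-word the ones assumed here. That corollary concludes that $L\times K$ satisfies property ($*$) as soon as three conditions hold: $L$ contains a nontrivial convergent sequence, $w(K)\ge\mathfrak c$, and $C(K)$ admits a strictly increasing PRI. The first two are exactly our standing assumptions, so the entire content of the proof reduces to checking that the remaining hypothesis---existence of a \emph{strictly increasing} PRI for $C(K)$---is automatic once $K$ is Valdivia.

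To supply that single missing ingredient, I would invoke the structural fact recalled earlier in Section~\ref{sec:Valdivia}: for every Valdivia compact space $K$ the space $C(K)$ admits a PRI (\cite[Theorem~2]{Valdivia}), and the construction can be arranged so that the resolution is strictly increasing. This is precisely the ``trivial adaptation'' of Valdivia's argument flagged in that section. Feeding this into Corollary~\ref{thm:PRIstrict} then closes the argument immediately, with no additional computation.

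The only point deserving attention---and hence the closest thing to an obstacle here---is the \emph{strictness} of the increase, as opposed to the mere existence of a PRI. For a generic PRI one could have $P_\alpha[C(K)]=P_\beta[C(K)]$ at some levels $\alpha<\beta$, which would cause the biorthogonal system built in the proof of Corollary~\ref{thm:PRIstrict} (by choosing unit vectors in $P_{\alpha+1}[C(K)]\cap\Ker(P_\alpha)$) to degenerate and lose cardinality. Strictness guarantees a bounded weak*-null biorthogonal system indexed by an interval of length $\dens\big(C(K)\big)=w(K)\ge\mathfrak c$, so that a subsystem of cardinality exactly $\mathfrak c$ is available, exactly as Corollary~\ref{thm:sequencebiorthogonal} requires. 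Once this refinement is in hand, the proposition is a direct consequence of Corollary~\ref{thm:PRIstrict}.
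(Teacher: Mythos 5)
Your proof is correct and follows exactly the paper's own argument: cite the fact (recalled in Section~\ref{sec:Valdivia}) that $C(K)$ admits a strictly increasing PRI when $K$ is Valdivia, and then apply Corollary~\ref{thm:PRIstrict}. Your extra remark on why strictness matters is a sound elaboration of the same route, not a different one.
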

\begin{proof}
As we have observed in Section~\ref{sec:Valdivia}, if $K$ is a Valdivia compact space, then $C(K)$ admits a strictly increasing PRI. The conclusion follows from Corollary~\ref{thm:PRIstrict}.
\end{proof}

\begin{prop}\label{thm:pontopesadoGdelta}
Let $K$ be a Valdivia compact space admitting a $G_\delta$ point $x$ with $w(x,K)\ge\mathfrak c$. Then $\Extc{C(K)}\ne0$ and,
if $K$ has ccc, then $K$ satisfies property ($*$).
\end{prop}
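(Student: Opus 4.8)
The plan is to separate the two assertions and to handle the core case, namely $K$ with ccc, by reducing to the subspace of points of weight at least $\mathfrak c$, where Corollary~\ref{thm:Gdeltastar} becomes applicable.

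First I would note that, since $K$ is a neighborhood of $x$ in itself, the hypothesis $w(x,K)\ge\mathfrak c$ immediately gives $w(K)\ge\mathfrak c$. Assume now that $K$ has ccc and set, as in Lemma~\ref{thm:cccH}, $H=\{y\in K:w(y,K)\ge\mathfrak c\}$. Part~(a) of that lemma gives $H\ne\emptyset$ (in fact $x\in H$), part~(c) says that $H$ is a regular closed subset of $K$, and part~(d) gives $w(y,H)\ge\mathfrak c$ for every $y\in H$. Since a regular closed subset of a Valdivia compactum is again Valdivia, $H$ is a Valdivia compact space with $w(y,H)\ge\mathfrak c$ for every $y\in H$.

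The key step is to check that $\{x\}$ is an admissible set for Corollary~\ref{thm:Gdeltastar} inside $H$. Writing $\{x\}=\bigcap_{n\in\omega}V_n$ with each $V_n$ open in $K$, we obtain $\{x\}=\bigcap_{n\in\omega}(V_n\cap H)$, so $x$ is a $G_\delta$ point of $H$; moreover $\{x\}$ is non-open in $H$, since otherwise $x$ would be isolated in $H$ and $w(x,H)$ would be finite, contradicting part~(d). Finally, $\{x\}$ admits an extension operator in $H$, namely the one sending each scalar to the corresponding constant function on $H$. Hence Corollary~\ref{thm:Gdeltastar}, applied to the Valdivia compactum $H$ with $F=\{x\}$, shows that $H$ satisfies property ($*$); and since $H$ is a closed subspace of $K$, Lemma~\ref{thm:fechapracima} then transfers property ($*$) to $K$. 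This proves the second assertion.

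For the first assertion I would argue by dichotomy on ccc. If $K$ has ccc, then $K$ satisfies property ($*$) by the above, and property ($*$) yields $\Extc{C(K)}\ne0$: the supports of the $f_{n,\alpha}$ lie in the pairwise disjoint sets $F_n$ of \eqref{eq:discretefamily}, so that $f_{n,\alpha}f_{m,\beta}=0$ for $n\ne m$ and Corollary~\ref{thm:produtozero} applies. If $K$ does not have ccc, then $\Extc{C(K)}\ne0$ by the known result for non-ccc Valdivia compacta (\cite[Proposition~2]{Castilloscattered}). The only substantive work lies in the ccc case, and there it is concentrated in the passage from the single $G_\delta$ point $x$ to the regular closed subspace $H$ on which the weight is everywhere at least $\mathfrak c$; this is precisely what Lemma~\ref{thm:cccH} supplies, so once the reduction is in place the remaining verifications are routine.
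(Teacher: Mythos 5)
Your proposal is correct and follows essentially the same route as the paper: split on ccc, invoke the known non-ccc result, and in the ccc case pass to $H$ from Lemma~\ref{thm:cccH} and apply Corollary~\ref{thm:Gdeltastar} with $F=\{x\}$, transferring property ($*$) back to $K$ via Lemma~\ref{thm:fechapracima}. The paper's proof is just a terse version of yours; the verifications you spell out (that $x\in H$, that $\{x\}$ is a non-open $G_\delta$ in $H$, the constant-function extension operator, and the passage through Corollary~\ref{thm:produtozero}) are exactly what it leaves implicit.
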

\begin{proof}
As mentioned before, the non-ccc case is already known. Assuming that $K$ has ccc, define $H$ as in Lemma~\ref{thm:cccH} and conclude that $H$ satisfies property ($*$) using Corollary~\ref{thm:Gdeltastar} with $F=\{x\}$.
\end{proof}

\begin{cor}
Let $K$ be a Valdivia compact space with $w(K)\ge\mathfrak c$ admitting a dense $\Sigma$-subset $A$ such that $K\setminus A$ is of first category.
Then $\Extc{C(K)}\ne0$ and, if $K$ has ccc, then $K$ satisfies property ($*$).
\end{cor}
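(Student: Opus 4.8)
The plan is to split the two assertions exactly as in Proposition~\ref{thm:pontopesadoGdelta}. For the inequality $\Extc{C(K)}\ne0$: if $K$ does not have ccc this is the already-known Valdivia case (\cite[Proposition~2]{Castilloscattered}), while if $K$ has ccc it follows from the second assertion, since property ($*$) implies $\Extc{C(K)}\ne0$ by Corollary~\ref{thm:produtozero} (disjoint supports give $f_{n,\alpha}f_{m,\beta}=0$ for $n\ne m$). So the entire content is the claim \emph{assuming $K$ has ccc, $K$ satisfies property ($*$)}, and this is what I would prove.

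First I would pass to the heavy part of $K$. As $K$ is Baire and $K\setminus A$ is of first category, $A$ is comeager. Applying Lemma~\ref{thm:cccH} to the ccc Valdivia space $K$, the set $H=\{x\in K:w(x,K)\ge\mathfrak c\}$ is a nonempty regular closed subset with $w(x,H)\ge\mathfrak c$ for all $x\in H$; in particular $H$ is again Valdivia. Moreover $H$ is ccc, since any uncountable pairwise disjoint family of nonempty open subsets of $H$ would, after intersecting with the dense open set $\Int(H)$, produce such a family in $K$. Finally $A\cap H$ is the canonical $\Sigma$-subset of $H$ and is comeager in $H$: restricting the meager set $K\setminus A$ to the open set $\Int(H)$ keeps it meager, and $H\setminus\Int(H)$ is nowhere dense in $H$ because $H$ is regular closed.

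This reduces everything to: \emph{a ccc Valdivia compactum $H$ whose canonical $\Sigma$-subset $A\cap H$ is comeager has a $G_\delta$ point.} Granting this, I would pick such a point $x$; since $w(x,H)\ge\mathfrak c>1$ the point is not isolated, so $F=\{x\}$ is a closed non-open $G_\delta$ subset of $H$, and it admits an extension operator (the constant functions). Corollary~\ref{thm:Gdeltastar}, applied to $H$, then yields that $H$ satisfies property ($*$), and since $H$ is a closed subspace of $K$, Lemma~\ref{thm:fechapracima} transfers property ($*$) to $K$, finishing the proof.

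The existence of this $G_\delta$ point is the crux and I expect it to be the main obstacle. Writing $A\cap H=\bigcup_J A_J$ with $A_J=\{x\in H:\supp x\subset J\}$ over the countable sets $J\subset I$, each $A_J$ is a closed metrizable subset (a homeomorph of a subset of $\R^J$) and the family $(A_J)_J$ is directed by inclusion. If some $A_J$ had nonempty interior $W$, then every point of $W$ would have a metrizable neighbourhood and hence be a $G_\delta$ point of $H$, and we would be done; so the task is to exclude that all $A_J$ are nowhere dense. Here the comeagerness hypothesis is genuinely needed: the Valdivia compacta $2^{\mathfrak c}$ and $[0,1]^{\mathfrak c}$ have \emph{meager} $\Sigma$-subsets and no $G_\delta$ points at all. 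Concretely, I would use ccc to localize the directed family $(A_J)_J$ to a countable subfamily, exploiting the $\sigma$-continuity of the Valdivia retractional skeleton, which makes $(A_J)_J$ closed, up to closure, under countable increasing unions (if $J_n\nearrow J^\ast$ then $A_{J^\ast}=\overline{\bigcup_nA_{J_n}}$); a Baire-category argument along a countable cofinal chain, or equivalently an elementary-submodel argument with $J^\ast=I\cap M$ for a countable $M\prec H_\theta$, should then force one closed set $A_J$ to be non-meager and hence to have nonempty interior. Making this localization precise against the comeagerness of $A\cap H$ is the step I expect to require the most care.
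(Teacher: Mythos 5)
Your reduction is sound and in fact retraces the paper's own route: splitting off the non-ccc case, passing to the regular closed set $H$ of Lemma~\ref{thm:cccH}, and feeding a $G_\delta$ point into Corollary~\ref{thm:Gdeltastar} with $F=\{x\}$, followed by Lemma~\ref{thm:fechapracima}, is exactly what Proposition~\ref{thm:pontopesadoGdelta} does. The problem is the crux you flagged, and it is not merely a step ``requiring care'': the strategy you propose for producing the $G_\delta$ point provably cannot work. You aim to show that some closed metrizable set $A_J=\big\{x\in H:\supp x\subset J\big\}$ has nonempty interior in $H$. But a point of such an interior would have a second countable compact neighborhood in $H$, so $w(x,H)\le\omega$, whereas item (d) of Lemma~\ref{thm:cccH} --- which you yourself invoke to see that $\{x\}$ is not open --- guarantees $w(x,H)\ge\mathfrak c$ for \emph{every} $x\in H$. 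Hence in $H$ every $A_J$ is nowhere dense, and the comeager set $A\cap H$ genuinely is a union of uncountably many nowhere dense metrizable pieces; no localization argument (ccc, elementary submodels, or otherwise) can force one of them to be non-meager, because none of them is. Working in $K$ instead of $H$ does not help: any point with a metrizable neighborhood has weight $<\mathfrak c$ in $K$, hence lies outside $H$ and is useless for Corollary~\ref{thm:Gdeltastar}. (A minor additional slip: the countable subsets of an uncountable index set admit no countable cofinal chain, so that variant of the argument does not even parse.)

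What is actually needed here is the result the paper imports wholesale from \cite[Theorem~3.3]{Kalenda}: if $K$ is Valdivia with a dense $\Sigma$-subset $A$ such that $K\setminus A$ is of first category, then the $G_\delta$ points of $K$ are dense in $K$. Given this, the paper's proof is one line: a $G_\delta$ point of $K$ lies in $\Int(H)\ne\emptyset$, it has weight $\ge\mathfrak c$ in $K$ because it belongs to $H$, and Proposition~\ref{thm:pontopesadoGdelta} applies. Note that such points cannot have metrizable neighborhoods (inside $H$ that is impossible, as above); the mechanism making them $G_\delta$ is different --- for instance, if $J$ is a countable $H$-good set and $x\in A_J$ has singleton fiber $r_J^{-1}(x)\cap H=\{x\}$, then $x$ is a $G_\delta$ point of $H$, being the preimage under $r_J$ of a $G_\delta$ subset of the metrizable space $A_J$. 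In short, ``has a metrizable neighborhood'' is strictly stronger than ``is a $G_\delta$ point'', and your plan conflates the two in a setting where only the weaker property can hold. To repair the proof, either cite Kalenda's theorem as the paper does, or prove it; your $(A_J)_J$ machinery is a reasonable starting point for the latter, but the target must be a singleton fiber, not a nonempty interior.
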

\begin{proof}
By \cite[Theorem~3.3]{Kalenda}, $K$ has a dense subset of $G_\delta$ points. Assuming that $K$ has ccc and defining $H$ as in Lemma~\ref{thm:cccH}, we obtain that $H$ contains a $G_\delta$ point of $K$, which implies that $K$ satisfies the assumptions of Proposition~\ref{thm:pontopesadoGdelta}.
\end{proof}

Now we investigate conditions under which a Valdivia compact space $K$ contains a homeomorphic copy of $[0,\omega]\times[0,\mathfrak c]$. Given an index set $I$ and a subset $J$ of $I$, we denote by $r_J:\R^I\to\R^I$ the map defined by setting $r_J(x)\vert_J=x\vert_J$ and $r_J(x)\vert_{I\setminus J}\equiv0$, for all $x\in\R^I$. Following \cite{ArgyrosCorson}, given a subset $K$ of $\R^I$, we say that $J\subset I$ is {\em $K$-good\/} if $r_J[K]\subset K$. In \cite[Lemma~1.2]{ArgyrosCorson}, it is proven that if $K$ is a compact subset of $\R^I$ and $\Sigma(I)\cap K$ is dense in $K$, then every infinite subset $J$ of $I$ is contained in a $K$-good set $J'$ with $\vert J\vert=\vert J'\vert$.

\begin{prop}\label{thm:sequencia}
Let $K$ be a Valdivia compact space admitting a dense $\Sigma$-subset $A$ such that some point of $K\setminus A$ is the limit of a nontrivial
sequence in $K$. Then $K$ contains a homeomorphic copy of\/ $[0,\omega]\times[0,\omega_1]$. In particular, assuming CH, we have that $K$ satisfies property ($*$).
\end{prop}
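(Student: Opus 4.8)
The plan is to produce the homeomorphic copy of $[0,\omega]\times[0,\omega_1]$ explicitly inside $K$ (a ZFC statement) and then deduce property ($*$) under CH by a direct appeal to Corollary~\ref{thm:0omega0c} and Lemma~\ref{thm:fechapracima}. First I would fix a continuous injection realizing $A$ as a dense $\Sigma$-subset, so that $K$ may be regarded as a compact subspace of $\R^I$ with $A=\Sigma(I)\cap K$ dense; then a point of $K$ lies in $K\setminus A$ exactly when its support is uncountable. Writing $p$ for the given point of $K\setminus A$, let $(p_n)_{n\in\omega}$ be a nontrivial sequence in $K$ with $p_n\to p$; discarding repetitions, I may assume the $p_n$ are pairwise distinct and distinct from $p$.

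The first step---which I expect to be the conceptually decisive one---is to observe that infinitely many of the $p_n$ must themselves have uncountable support. Indeed, if all but finitely many $p_n$ lay in $A$, the union of their (countable) supports would be a countable set; but convergence in $\R^I$ is coordinatewise, so for each $i\in\supp p$ we would have $(p_n)_i\to p_i\ne0$, hence $i\in\supp p_n$ for large $n$, forcing $\supp p$ into that countable union and contradicting $p\notin A$. Passing to this subsequence, I may therefore assume that every $p_n$ has uncountable support. This is precisely what lets the sequence be thickened into a full grid: each $p_n$, and $p$ itself, now carries enough support to feed an entire $[0,\omega_1]$-tower.

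Next I would build, by transfinite recursion, an increasing chain $(J_\alpha)_{\alpha\le\omega_1}$ of $K$-good subsets of $I$, continuous at limits and with each $J_\alpha$ countable for $\alpha<\omega_1$, using \cite[Lemma~1.2]{ArgyrosCorson} to pass to a countable $K$-good enlargement at each step and using that an increasing union of $K$-good sets is again $K$-good (because $r_J(x)=\lim_\alpha r_{J_\alpha}(x)$ coordinatewise and $K$ is closed). I would start with $J_0$ containing, for every pair of distinct points among $\{p\}\cup\{p_n:n\in\omega\}$, one coordinate at which they differ, and at each successor stage I would adjoin to $J_\alpha$ one fresh coordinate taken from each of the supports $\supp p_n$ and $\supp p$ (possible, since these supports are uncountable, hence never exhausted by the countable $J_\alpha$). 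Writing $p_\omega:=p$, this guarantees that $\supp p_m\cap J_\alpha$ is strictly increasing in $\alpha$ for every $m\in[0,\omega]$.

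Finally I would define $H\colon[0,\omega]\times[0,\omega_1]\to K$ by $H(m,\alpha)=r_{J_\alpha}(p_m)$, which lands in $K$ by $K$-goodness, and verify that it is the desired embedding. Continuity is checked coordinatewise: the $i$-th coordinate of $H(m,\alpha)$ equals $(p_m)_i$ when $i\in J_\alpha$ and $0$ otherwise, so it is the product of $m\mapsto(p_m)_i$ (continuous on $[0,\omega]$, since $(p_n)_i\to p_i$) and $\alpha\mapsto\chilow{[\rho(i),\omega_1]}(\alpha)$, where $\rho(i)$ is the stage at which $i$ enters the chain; continuity of the chain at limits forces $\rho(i)$ to be $0$ or a successor, so this indicator is continuous on $[0,\omega_1]$ (it is identically $0$ when $i\notin J_{\omega_1}$), and the product is jointly continuous. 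Injectivity splits in two: for $m\ne m'$ the separating coordinate placed in $J_0$ already distinguishes $H(m,\alpha)$ from $H(m',\alpha')$ for all $\alpha,\alpha'$, whereas for fixed $m$ the strict growth of $\supp p_m\cap J_\alpha$ makes $\alpha\mapsto H(m,\alpha)$ injective. The main technical care lies exactly here---coordinating the recursion so that separation and strict support-growth hold simultaneously for all countably many towers. Since $[0,\omega]\times[0,\omega_1]$ is compact and $K$ is Hausdorff, $H$ is a homeomorphism onto its image. For the final assertion, under CH we have $\omega_1=\mathfrak c$, so the copy just built is a compact (hence closed) copy of $[0,\omega]\times[0,\mathfrak c]$; this space satisfies property ($*$) by Corollary~\ref{thm:0omega0c}, and therefore so does $K$ by Lemma~\ref{thm:fechapracima}.
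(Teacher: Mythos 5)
Your proof is correct and follows essentially the same route as the paper's: reduce to a sequence lying entirely in $K\setminus A$ (the paper cites sequential closedness of $\Sigma$-subsets, you prove it inline), fix a countable separating set, build by transfinite recursion via \cite[Lemma~1.2]{ArgyrosCorson} an increasing, limit-continuous chain $(J_\alpha)_{\alpha\le\omega_1}$ of countable $K$-good sets meeting each support in a strictly growing way, and embed via $(n,\alpha)\mapsto r_{J_\alpha}(x_n)$, concluding under CH from Corollary~\ref{thm:0omega0c} and Lemma~\ref{thm:fechapracima}. The only difference is that you spell out the continuity and injectivity verifications that the paper leaves as ``readily checked,'' and you do so correctly.
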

\begin{proof}
We can obviously assume that $K$ is a compact subset of some $\R^I$ and that $A=\Sigma(I)\cap K$. Since $A$ is sequentially closed, our hypothesis implies that there exists a continuous injective map $[0,\omega]\ni n\mapsto x_n\in K\setminus A$. Let $J$ be a countable subset of $I$ such that $x_n\vert_J\ne x_m\vert_J$, for all $n,m\in[0,\omega]$ with $n\ne m$. Using \cite[Lemma~1.2]{ArgyrosCorson} and transfinite recursion, one easily obtains a family $(J_\alpha)_{\alpha\le\omega_1}$ of $K$-good subsets of $I$ satisfying the following conditions:
\begin{itemize}
\item $J_\alpha$ is countable, for $\alpha<\omega_1$;
\item $J\subset J_0$;
\item $J_\alpha\subset J_\beta$, for $0\le\alpha\le\beta\le\omega_1$;
\item $J_\alpha=\bigcup_{\beta<\alpha}J_\beta$, for limit $\alpha\in[0,\omega_1]$;
\item for all $n\in[0,\omega]$, the map $[0,\omega_1]\ni\alpha\mapsto J_\alpha\cap\supp x_n$ is injective.
\end{itemize}
Given these conditions, it is readily checked that the map
\[[0,\omega]\times[0,\omega_1]\ni(n,\alpha)\longmapsto r_{J_\alpha}(x_n)\in K\]
is continuous and injective.
\end{proof}

\begin{rem}
The following converse of Proposition~\ref{thm:sequencia} also holds: if $K$ is a Valdivia compact space containing a homeomorphic copy of $[0,\omega]\times[0,\omega_1]$, then $K\setminus A$ contains a nontrivial convergent sequence, for {\em any\/} dense $\Sigma$-subset $A$ of $K$.
Namely, if $\phi:[0,\omega]\times[0,\omega_1]\to K$ is a continuous injection, then $\phi(\omega,\alpha)\in K\setminus A$ for some $\alpha\in[0,\omega_1]$, since $[0,\omega_1]$ is not Corson (\cite[Example~1.10~(i)]{Kalenda}). Moreover, the nontrivial sequence $\big(\phi(n,\alpha)\big)_{n\in\omega}$ converges
to $\phi(\omega,\alpha)$. One consequence of this observation is that if $K\setminus A$ contains a nontrivial convergent sequence for {\em some\/} dense $\Sigma$-subset $A$ of $K$, then $K\setminus A$ contains a nontrivial convergent sequence for {\em any\/} dense $\Sigma$-subset $A$ of $K$.
\end{rem}

\begin{rem}
If a Valdivia compact space $K$ admits two distinct dense $\Sigma$-subsets, then the assumption of Proposition~\ref{thm:sequencia} holds for $K$.
Namely, given dense $\Sigma$-subsets $A$ and $B$ of $K$ and a point $x\in A\setminus B$, then $x$ is not isolated, since $B$ is dense.
Moreover, $x$ is not isolated in $A$, because $A$ is dense. Finally, since $A$ is a Fr\'echet--Urysohn space (\cite[Lemma~1.6~(ii)]{Kalenda}), $x$ is the limit of a sequence in $A\setminus\{x\}$.
\end{rem}

Finally, we observe that the validity of the following conjecture would imply, under CH, that $\Extc{C(K)}\ne0$ for any nonmetrizable Valdivia compact space $K$.
\begin{conjecture}
If $K$ is a nonempty Valdivia compact space having ccc, then either $K$ has a $G_\delta$ point or $K$ admits a nontrivial convergent sequence in the complement of a dense $\Sigma$-subset.
\end{conjecture}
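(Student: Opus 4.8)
The plan is to establish the dichotomy by contraposition on its first alternative: assuming that $K$ is a nonempty ccc Valdivia compactum with no $G_\delta$ point, I would exhibit a point of $K\setminus A$ that is the limit of a nontrivial sequence in $K$. By Proposition~\ref{thm:sequencia} and the remark following it, this is exactly the hypothesis needed and, via those results, is tantamount to embedding $[0,\omega]\times[0,\omega_1]$ into $K$. Following the setup of that proposition, realize $K$ as a compact subspace of some $\R^I$ with $A=\Sigma(I)\cap K$ dense; since a nonempty Corson compactum always has a $G_\delta$ point (\cite[Theorem~3.3]{Kalenda}), the hypothesis forces $K$ to be non-Corson, so that $K\setminus A\neq\emptyset$. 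Fix $x\in K\setminus A$, whose support $\supp x$ is then uncountable; the goal is to produce a nontrivial sequence in $K$, built from $x$, converging to a point of $K\setminus A$.

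The main mechanism I would attempt is \emph{local coordinate perturbation}, modelled on $K=2^{\omega_1}$: there any point of uncountable support is the limit of the nontrivial sequence obtained by flipping a single coordinate as it ranges over a countable index set, and the limit stays in $K\setminus A$. To imitate this I would select a countable set $\{i_k\}_{k\in\omega}\subset\supp x$ and seek points $y_k\in K$, agreeing with $x$ off a controlled finite set of coordinates near $i_k$, with $y_k\to x$ (or to a nearby point of uncountable support). The device for remaining inside $K$ is the family of $K$-good sets of \cite[Lemma~1.2]{ArgyrosCorson} together with the retractions $r_J$: one builds an increasing, continuous chain of countable $K$-good sets isolating the indices $i_k$, so that the admissible perturbations of $x$ are realized as images $r_J(x')$ of nearby points $x'\in K$. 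As in the proof of Lemma~\ref{thm:cccH}, the ccc hypothesis would serve to keep the relevant antichains of small open sets countable, letting the construction close at a countable stage.

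The step I expect to be the genuine obstacle, and the reason the statement remains conjectural, is the passage from the local largeness encoded by ``every point has uncountable character'' to the existence of a \emph{countable} convergent sequence whose limit lies in $K\setminus A$. Uncountable character alone never produces convergent sequences --- witness the top point of $[0,\omega_1]$, or $\beta\omega$ --- so the Valdivia and ccc hypotheses must do all the work of turning a transfinite, $\omega_1$-indexed accumulation of truncations $r_J(x)$, which a priori land in $A$, into a genuine $\omega$-indexed sequence whose limit avoids $A$. In the homogeneous product case this conversion is automatic, but a general Valdivia compactum is far from homogeneous and offers no evident handle for localizing the perturbation while simultaneously controlling membership in $K$ and reaching a limit in $K\setminus A$. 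Bridging this gap --- most plausibly through a reflection argument built on elementary submodels of size $\omega_1$ adapted to a retractional skeleton of $K$, chosen so that the skeleton reflects to a countable substructure carrying the sought sequence --- is where a complete proof would have to concentrate its effort.
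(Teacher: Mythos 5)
There is no proof to compare against: the statement you were given is the paper's \emph{Conjecture}, which the authors explicitly leave open. The paper proves nothing in its direction; on the contrary, it constructs the path space $P(T)$ of the tree $T=\bigcup_{\alpha\in S(\omega_1)}\omega_1^\alpha$ to show that the dichotomy \emph{fails} once the ccc hypothesis is dropped. Your proposal, to its credit, is honest that it is not a proof --- your third paragraph concedes that the ``genuine obstacle'' is unresolved --- but that obstacle is not a technical step left to the reader; it \emph{is} the conjecture. Concretely: for a countable $K$-good set $J$, the truncation $r_J(x)$ lies in $A=\Sigma(I)\cap K$, and the chain $\big(r_{J_\alpha}(x)\big)_{\alpha<\omega_1}$ reaches a point of $K\setminus A$ only as a transfinite, $\omega_1$-indexed limit, never as the limit of an honest sequence. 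Your ``local coordinate perturbation'' modelled on $2^{\omega_1}$ has no analogue in a general Valdivia compactum, because there is no mechanism forcing the perturbed points to remain in $K$, and the closing suggestion (elementary submodels, retractional skeletons) is a direction of attack, not an argument.

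A sharper way to see that your outline cannot work as written: it never uses ccc in an essential way (your only appeal to it, ``keeping antichains countable,'' plays no identifiable role in producing the sequence). But the paper's example $P(T)$ is a nonempty Valdivia compactum with no $G_\delta$ points in which \emph{no} point of $P(T)\setminus\Sigma(T)$ is the limit of a nontrivial sequence --- there the supports $A(\lambda_n)\cap A(\lambda_m)$ of distinct converging elements force any sequential limit to have countable support. So any construction of the kind you sketch, if it did not invoke ccc at precisely the step you leave open, would apply to $P(T)$ and yield a contradiction. Any correct proof must therefore locate exactly how ccc excludes $P(T)$-like behavior; identifying that is the open problem, not a refinement of your plan.
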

To see that the conjecture implies the desired result, use Lemma~\ref{thm:cccH} and Propositions~\ref{thm:pontopesadoGdelta} and \ref{thm:sequencia}, keeping in mind that a regular closed subset of a ccc space has ccc as well. The conjecture remains open, but in what follows we present an example showing that it is false if the assumption that $K$ has ccc is removed.

\smallskip

Recall that a {\em tree\/} is a partially ordered set $(T,{\le})$ such that, for all $t\in T$, the set $(\cdot,t)=\big\{s\in T:s<t\big\}$ is well-ordered.
As in \cite[p.~288]{Todorcevic}, we define a compact Hausdorff space from a tree $T$ by considering the subspace $P(T)$ of $2^T$ consisting of all characteristic functions of paths of $T$; by a {\em path\/} of $T$ we mean a totally ordered subset $A$ of $T$ such that $(\cdot,t)\subset A$, for all $t\in A$. It is easy to see that $P(T)$ is closed in $2^T$; we call it the {\em path space\/} of $T$.
%\[P(T)=\bigcap_{\substack{t,s\,\in\,T\\t\perp s}}\big\{\epsilon\in2^T:\epsilon(t)\epsilon(s)=0\big\}\cap
%\bigcap_{\substack{t,s\,\in\,T\\s\subset t}}\big\{\epsilon\in2^T:\text{$\epsilon(t)=1\Rightarrow\epsilon(s)=1$}\big\}\]

Denote by $S(\omega_1)$ the set of countable successor ordinals and consider the tree
$T=\bigcup_{\alpha\in S(\omega_1)}\omega_1^\alpha$, partially ordered by inclusion. The path space $P(T)$ is the image of the injective map $\Lambda\ni\lambda\mapsto\chilow{A(\lambda)}\in2^T$, where $\Lambda=\bigcup_{\alpha\le\omega_1}\omega_1^\alpha$ and $A(\lambda)=\big\{t\in T:t\subset\lambda\big\}$.
\begin{prop}
If the tree $T$ is defined as above, then its path space $P(T)$ is a compact subspace of $\R^T$ satisfying the following conditions:
\begin{itemize}
\item[(a)] $P(T)\cap\Sigma(T)$ is dense in $P(T)$, so that $P(T)$ is Valdivia;
\item[(b)] $P(T)$ has no $G_\delta$ points;
\item[(c)] no point of $P(T)\setminus\Sigma(T)$ is the limit of a nontrivial sequence in $P(T)$.
\end{itemize}
\end{prop}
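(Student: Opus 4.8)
The plan is to work throughout with the parametrization $\lambda\mapsto\chilow{A(\lambda)}$ of $P(T)$ by $\Lambda=\bigcup_{\alpha\le\omega_1}\omega_1^\alpha$, keeping in mind that $A(\lambda)$ consists exactly of the restrictions $\lambda|_\beta$ with $\beta\in S(\omega_1)$ not exceeding the length of $\lambda$. Since the support of $\chilow{A(\lambda)}$ in $2^T$ is $A(\lambda)$ itself, the set $A(\lambda)$ is countable precisely when $\lambda$ has countable length; hence $P(T)\cap\Sigma(T)$ is the set of $\chilow{A(\lambda)}$ with $\lambda$ of countable length, while $P(T)\setminus\Sigma(T)$ consists of the uncountable branches $\chilow{A(\lambda)}$ with $\lambda\in\omega_1^{\omega_1}$. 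I would also record two standard reformulations coming from the induced topology of $2^T$: a basic neighborhood of $\chilow{A(\lambda)}$ is obtained by prescribing, for finitely many nodes $t\in T$, whether $t\in A(\lambda)$ or not; and $\chilow{A(\lambda)}$ is a $G_\delta$ point of $P(T)$ if and only if there is a countable set $C\subset T$ such that no path other than $A(\lambda)$ agrees with $A(\lambda)$ on $C$ (a countable intersection of basic clopen sets then isolates the point).

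For (a), given $\lambda$ and a basic neighborhood of $\chilow{A(\lambda)}$ determined by a finite $F\subset T$, I would truncate: since $F$ is finite, the ordinal $\beta^*=\sup_{t\in F}\Dom(t)$ is countable, so choosing a countable successor $\gamma>\beta^*$ and replacing $\lambda$ by $\mu=\lambda|_\gamma$ produces a countable path $A(\mu)\in\Sigma(T)$ that agrees with $A(\lambda)$ on every $t\in F$, because membership of such a $t$ in either path depends only on $\lambda|_{\Dom(t)}$ and $\Dom(t)<\gamma$. This gives density of $P(T)\cap\Sigma(T)$, and since $P(T)\cap\Sigma(T)=\varphi^{-1}[\Sigma(T)]$ for the inclusion $\varphi\colon P(T)\hookrightarrow\R^T$, the space $P(T)$ is Valdivia.

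For (b), I would fix $\lambda$ and an arbitrary countable $C\subset T$ and construct a path distinct from $A(\lambda)$ that agrees with it on $C$, so that the criterion above shows $\chilow{A(\lambda)}$ is not a $G_\delta$ point. If $\lambda$ has length $\omega_1$, the countable set $C$ meets $A(\lambda)$ only in restrictions $\lambda|_\beta$ with $\beta$ below some countable bound, so the truncation $\mu=\lambda|_\gamma$, with $\gamma$ a countable successor exceeding every $\Dom(t)$ for $t\in C$, agrees with $\lambda$ on $C$ while yielding a strictly smaller, countable path. If $\lambda$ has countable length $\alpha$, I would instead extend it by one coordinate, $\mu=\lambda\cup\{(\alpha,\nu)\}$; here $A(\mu)=A(\lambda)\cup\{\mu\}$, so it suffices to choose the value $\nu\in\omega_1$ (there are uncountably many, and $C$ is countable) so that the single new node $\mu$ does not lie in $C$. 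In either case the resulting path differs from $A(\lambda)$ but agrees with it on $C$.

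Part (c) is where the real work lies, and I expect it to be the main obstacle. Fix $\lambda\in\omega_1^{\omega_1}$ and suppose a sequence $\chilow{A(\lambda_n)}$ converges to $\chilow{A(\lambda)}$; I would show it must be eventually equal to $\chilow{A(\lambda)}$, so that no nontrivial sequence converges to this point. Convergence in $2^T$ means that for each node $t$ we eventually have $t\in A(\lambda_n)\iff t\in A(\lambda)$. The key device is to attach to each $n$ with $A(\lambda_n)\ne A(\lambda)$ a countable branch ordinal $\eta_n<\omega_1$ --- the first place where $\lambda_n$ disagrees with $\lambda$, or the length of $\lambda_n$ if it is a proper initial segment --- with the property that $\lambda|_\beta\notin A(\lambda_n)$ for every successor $\beta>\eta_n$. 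If infinitely many $n$ had $A(\lambda_n)\ne A(\lambda)$, then the supremum $\eta^*$ of the corresponding $\eta_n$ would still be a countable ordinal, and for a countable successor $\beta^*>\eta^*$ the node $t^*=\lambda|_{\beta^*}$ would satisfy $t^*\in A(\lambda)$ yet $t^*\notin A(\lambda_n)$ for all those infinitely many $n$, contradicting convergence at the coordinate $t^*$. Hence $A(\lambda_n)=A(\lambda)$ for all but finitely many $n$. The subtle point to get right is the uncountable-cofinality pigeonhole that converts convergence at each individual coordinate into a single well-chosen coordinate $t^*$ at which convergence fails, and this hinges crucially on $\lambda$ having length exactly $\omega_1$, so that its proper initial segments $\lambda|_\beta$ are genuine nodes of $T$ for cofinally many countable $\beta$.
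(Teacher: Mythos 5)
Your proof is correct, and while it overlaps with the paper's in part (a) (both use truncation of a long branch to a countable level), it differs genuinely in the other two parts. In (b), the paper disposes of the points of $P(T)\setminus\Sigma(T)$ by citing Kalenda's result that every $G_\delta$ point of a Valdivia compact space must lie in the dense $\Sigma$-subset, and runs the one-point-extension argument $\mu=\lambda\cup\{(\alpha,\nu)\}$ only for points of $\Sigma(T)$; you handle both kinds of points by hand, reusing the truncation trick from (a) for the long branches, which makes your (b) self-contained (no external citation) at the cost of one extra case. In (c), the paper's argument is structured differently from yours: given a convergent sequence of \emph{pairwise distinct} points $\chilow{A(\lambda_n)}$, it observes that every node in the support of the limit must lie in $A(\lambda_n)\cap A(\lambda_m)$ for some $n\ne m$, and each such intersection is countable because two distinct paths part ways at a countable level; hence the limit has countable support and so lies in $\Sigma(T)$. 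Your pigeonhole on the divergence ordinals $\eta_n$, using that countably many countable ordinals have countable supremum, proves the equivalent statement that any sequence converging to a long branch is eventually constant. Both arguments rest on the same combinatorial fact---distinct paths share only a countable set of nodes---but the paper packages it as a countability estimate on the support of the limit, which is slightly shorter, whereas your version avoids the (implicit) extraction of a pairwise-distinct subsequence from a nontrivial convergent sequence and yields the ``eventually constant'' conclusion directly.
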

\begin{proof}
To prove (a), note that $\chilow{A(\lambda)}=\lim_{\alpha<\omega_1}\chilow{A(\lambda\vert_\alpha)}$
for all $\lambda\in\omega_1^{\omega_1}$. Let us prove (b). Since $P(T)$ is Valdivia, every $G_\delta$ point of $P(T)$ must be in $\Sigma(T)$ (\cite[Proposition~2.2~(3)]{Kalenda2}), i.e., it must be of the form $\chilow{A(\lambda)}$, with $\lambda\in\omega_1^\alpha$, $\alpha<\omega_1$. To see that $\chilow{A(\lambda)}$ cannot be a $G_\delta$ point of $P(T)$, it suffices to check that for any countable subset $E$ of $T$, there exists $\mu\in\Lambda$, $\mu\ne\lambda$, such that $\chilow{A(\lambda)}$ and $\chilow{A(\mu)}$ are identical on $E$. To this aim, simply take $\mu=\lambda\cup\{(\alpha,\beta)\}$, with $\beta\in\omega_1\setminus\big\{t(\alpha):\text{$t\in E$ and $\alpha\in\Dom(t)$}\big\}$. Finally, to prove (c), let $\big(\chilow{A(\lambda_n)}\big)_{n\ge1}$ be a sequence of pairwise distinct elements of $P(T)$ converging to some $\epsilon\in P(T)$ and note that the support of $\epsilon$ must be contained in the countable set $\bigcup_{n\ne m}\big(A(\lambda_n)\cap A(\lambda_m)\big)$.
\end{proof}

It is easy to see that, for $T$ defined as above, the space $P(T)$ does not have ccc. Namely, setting $U_t=\big\{\epsilon\in P(T):\epsilon(t)=1\big\}$ for $t\in T$, we have that $U_t$ is a nonempty open subset of $P(T)$ and that $U_t\cap U_s=\emptyset$, when $t,s\in T$ are incomparable.

\end{section}

\end{document}